\documentclass[12pt]{article}
\usepackage{amssymb,amsthm,amsmath,latexsym}
\usepackage{url}
\usepackage{color}
\usepackage{comment}
\usepackage{lscape}
\usepackage{mathtools}
\newtheorem{thm}{Theorem}[section]
\newtheorem{prop}[thm]{Proposition}
\newtheorem{lem}[thm]{Lemma}

\theoremstyle{remark}
\newtheorem{rem}[thm]{Remark}
\newcommand{\FF}{\mathbb{F}}
\newcommand{\ZZ}{\mathbb{Z}}
\newcommand{\cT}{\mathcal{DT}}
\newcommand{\cC}{\mathcal{DC}}
\newcommand{\cN}{\mathcal{DN}}
\newcommand{\0}{\mathbf{0}}

\newcommand{\ww}{\omega}
\newcommand{\vv}{\omega^2}
\DeclareMathOperator{\wt}{wt}
\DeclareMathOperator{\rank}{rank}

\begin{document}

\title{Double Toeplitz codes and their average weight enumerators}

\author{
Masaaki Harada\thanks{
Research Center for Pure and Applied Mathematics,
Graduate School of Information Sciences,
Tohoku University, Sendai 980--8579, Japan.
email: \protect\url{mharada@tohoku.ac.jp.}
}
and
Keito Yamaguchi\thanks{
Research Center for Pure and Applied Mathematics,
Graduate School of Information Sciences,
Tohoku University, Sendai 980--8579, Japan.
email: \protect\url{yamaguchi.keito.t5@dc.tohoku.ac.jp}.
}
}

\maketitle

\begin{abstract}
Recently, double Toeplitz codes have been introduced as a
generalization of double circulant codes.
In this paper, we study the average weight enumerators of double Toeplitz codes.
As an application, we consider the existence of 
double Toeplitz codes over $\FF_q$ with some specified minimum weights
for $q \in \{2,3,4\}$.
We also give a classification of double Toeplitz codes over $\FF_q$ with the largest minimum weights
for moderate lengths and $q \in \{2,3,4\}$.
\end{abstract}

\section{Introduction}

A code $C$ is called \emph{self-dual},
\emph{isodual} and \emph{formally self-dual} if 
$C=C^\perp$,
$C$ is  equivalent to $C^\perp$ and
the weight enumerator of $C$ coincides with that of $C^\perp$, respectively,
where $C^\perp$ is the dual code of $C$ under  the standard inner product.
Self-dual codes have been studied extensively for many decades
(see e.g., \cite{SPLAG} and \cite{RS-Handbook}).
Isodual codes and formally self-dual codes have also received considerable interest
(see e.g., \cite{BH}, \cite{DGH}, \cite{GH97}, \cite{HLL}, \cite{LSW}, 
\cite{LLZ}, 
\cite{SXS}, \cite{WLLL}
and the references given therein).
Double circulant codes are a remarkable class of isodual codes.

Recently, double Toeplitz codes have been introduced as
generalizations of double circulant codes in order to construct isodual
(formally self-dual) codes
with large minimum weights~\cite{SXS}.
Let $d_{q,DT}(n)$ denote the largest minimum weight among all double 
Toeplitz $[n,n/2]$ codes over $\FF_q$,
where $\FF_q$ denotes the finite field of order $q$.
It is a fundamental problem to determine the value $d_{q,DT}(n)$
for a fixed pair $(q,n)$.
The values $d_{q,DT}(n)$ were determined in~\cite{SXS} for 
moderate lengths $n$ and $q \in \{2,3,4,5,7\}$.
In addition, it was shown in~\cite{SXS} that double Toeplitz codes are asymptotically good.
Their work is the major inspiration for the present work.
Further background  on double Toeplitz codes and their generalizations can be found 
in~\cite{Cheng}, \cite{LSL}, \cite{LLZ}, \cite{LYLW}, \cite{SOXS} and \cite{WLLL}.



Weight enumerators of several types have been widely studied (see  e.g., \cite{MS-book}).
Let $\Omega_{q,n}$ be the set of all distinct double Toeplitz $[n,n/2]$ codes over $\FF_q$.
The \emph{average weight enumerator} of double Toeplitz $[n,n/2]$ codes over $\FF_q$
is defined as follows:
\[
\Psi_{q,n}(y)=\frac{1}{|\Omega_{q,n}|}\sum_{C \in \Omega_{q,n}}W_C(y),
\]
where $W_C(y)$ denotes the weight enumerator of $C$.
The average weight enumerators of self-dual codes are used to  establish the existence of
self-dual codes with specified minimum weights
(see e.g.,~\cite[Proof of Theorem~4]{C-S}).
In this paper, we study the average weight enumerators of double Toeplitz codes.
In the same way as for self-dual codes, we establish the existence of 
double Toeplitz codes over $\FF_q$ with specified minimum weights.
We say that a double Toeplitz $[n,n/2,d_{q,DT}(n)]$ code over $\FF_q$ is
\emph{DT-optimal}.
It is also a fundamental problem 
to classify DT-optimal double Toeplitz $[n,n/2]$ codes over $\FF_q$ for a fixed pair $(q,n)$.
In this paper, we give a classification of DT-optimal double Toeplitz codes over $\FF_q$
for moderate lengths and $q \in \{2,3,4\}$.

This paper is organized as follows.
In Section~\ref{Sec:2}, we provide definitions, notations and basic results.
In Section~\ref{Sec:awe}, we  theoretically derive explicit expressions of the average weight 
enumerators of double Toeplitz codes over $\FF_q$ 
without finding the set $\Omega_{q,n}$  (Theorem~\ref{thm:awe}).
As an application of Theorem~\ref{thm:awe}, in Section~\ref{Sec:app}, 
we consider the existence of double Toeplitz codes over $\FF_q$
with minimum weight at least $d$ for $q\in \{2,3,4\}$ and $d\le 10$
(Propositions~\ref{prop:minwt-F2}, \ref{prop:minwt-F3} and~\ref{prop:minwt-F4}).
In Section~\ref{Sec:classification}, we give a classification of 
DT-optimal double Toeplitz codes over $\FF_2$ and  $\FF_3$ 
for lengths up to $40$ and $26$, respectively
(Tables~\ref{Tab:F2-classification} and \ref{Tab:F3-classification}). 
We also give a classification of 
DT-optimal double Toeplitz codes over $\FF_4$
for lengths up to $20$ except $16$, 
while determining the largest minimum weights
for lengths $16, 18, \ldots, 24$ (Table~\ref{Tab:F4-classification} and 
Proposition~\ref{prop:F4-16-18-20}).
Our classification reveals the existence of 
many DT-optimal double Toeplitz codes that are inequivalent
to any of double circulant codes and double negacirculant codes.

\section{Preliminaries}\label{Sec:2}
In this section, we give definitions, notations and basic results
used throughout this paper.


\subsection{Codes and formally self-dual codes}
Let $\FF_q$ denote the finite field of order $q$,
where $q$ is a prime power.
An $[n,k]$ \emph{code $C$ over $\FF_q$}
is a $k$-dimensional vector subspace of $\FF_q^n$.
Codes over $\FF_2$, $\FF_3$ and $\FF_4$ 
are called \emph{binary}, \emph{ternary} and \emph{quaternary},
respectively.
The parameter $n$ is called the \emph{length} of $C$.
A \emph{generator matrix} of an $[n,k]$ code $C$ is a $k \times n$
matrix such that the rows of the matrix generate $C$.
Two $[n,k]$ codes $C$ and $C'$ over $\FF_q$ are
\emph{equivalent}, denoted $C \cong C'$,
if there is an $n \times n$ monomial matrix $P$ over $\FF_q$ with
$C' = CP$, where $CP=\{ x P \mid x \in C\}$.

The \emph{weight} $\wt(x)$ of a vector $x \in \FF_q^n$ is
the number of nonzero components of $x$.
A vector of a code $C$ is called a \emph{codeword} of $C$.
The minimum nonzero weight of all codewords in $C$ is called
the \emph{minimum weight} of $C$. An  $[n,k,d]$ code
is an $[n,k]$ code with minimum weight $d$.
The \emph{weight enumerator} of $C$
is given by the polynomial $W_C(y)=\sum_{c \in C}y^{\wt(c)}$.

The \emph{dual code} $C^{\perp}$  of an $[n,k]$ code $C$ over $\FF_q$
is defined as follows:
\begin{align*}
C^{\perp}&=
\{x \in \FF_q^n \mid  x \cdot y = 0 \text{ for all } y \in C\},
\end{align*}
where
$x \cdot y = \sum_{i=1}^{n} x_i {y_i}$
for $x=(x_1,x_2,\ldots,x_n)$ and $y=(y_1,y_2,\ldots,y_n) \in \FF_{q}^n$.
A generator matrix of the dual code $C^{\perp}$ of a code $C$ is called
a \emph{parity-check matrix} of $C$.
A code $C$ is called \emph{self-dual}, \emph{isodual} and
\emph{formally self-dual}
if $C=C^\perp$, $C \cong C^\perp$ and
$W_C(y)=W_{C^\perp}(y)$, respectively.
Of course, self-dual codes are automatically isodual,
and isodual codes are automatically formally self-dual.

\subsection{Double Toeplitz codes}\label{sec:DTC}

Throughout this paper, let $I_n$ denote the identity matrix of order $n$.
An $n \times n$  matrix of the following form:
\[
\left( \begin{array}{cccccc}
r_0&r_1&r_2& \cdots &r_{n-2} &r_{n-1}\\
\mu r_{n-1}&r_0&r_1& \cdots &r_{n-3}&r_{n-2} \\
\mu r_{n-2}&\mu r_{n-1}&r_0& \cdots &r_{n-4}&r_{n-3} \\
\vdots &\vdots & \vdots &&\vdots& \vdots\\
\vdots &\vdots & \vdots &&\vdots& \vdots\\
\mu r_1&\mu r_2&\mu r_3& \cdots&\mu r_{n-1}&r_0
\end{array}
\right)
\]
is called \emph{circulant} if $\mu=1$ and 
\emph{negacirculant} if $\mu=-1$.
Codes with generator matrices of the following form:
\begin{equation*}\label{eq:pDCC}
\left(\begin{array}{ccccc}
{} & I_n  & {} & A & {} \\
\end{array}\right)
\end{equation*}
are called (pure) \emph{double circulant} and \emph{double negacirculant}
if the matrices $A$ are circulant and negacirculant, respectively.
We denote the codes by 
$\cC(r)\text{ and }\cN(r)$,
respectively, where $r$ denotes the first row  of $A$.

An $n \times n$  matrix of the following form:
\[
\left( \begin{array}{cccccc}
t  &a_1&a_2& \cdots &a_{n-2} &a_{n-1}\\
b_1&t  &a_1& \cdots &a_{n-3}&a_{n-2} \\
b_2&b_1&t  & \cdots &a_{n-4}&a_{n-3} \\
\vdots &\vdots & \vdots &&\vdots& \vdots\\
\vdots &\vdots & \vdots &&\vdots& \vdots\\
b_{n-1}&b_{n-2}&b_{n-3}& \cdots&b_1&t  
\end{array}
\right)
\]
is called \emph{Toeplitz}.
We denote the above matrix by $T(t,a,b)$,
where $a=(a_1,a_2,\ldots,a_{n-1})$ and $b=(b_1,b_2,\ldots,b_{n-1})$.
Of course, $T(t,a,b)$
is circulant and negacirculant if 
\[
a_i=b_{n-i} \text{ and }a_i=-b_{n-i}\ (i=1,2,\ldots,n-1),
\]
respectively.
A code with generator matrix of the following form:
\begin{equation*}\label{q:DTC}
\left(\begin{array}{ccccc}
{} & I_n  & {} & T(t,a,b) & {} \\
\end{array}\right)
\end{equation*}
is called \emph{double Toeplitz}.  
We denote the code by 
$\cT(t,a,b)$.

Let $d_{q,DT}(n)$ denote the largest minimum weight among
all double Toeplitz $[n,n/2]$ codes over $\FF_q$.
We say that a double Toeplitz $[n,n/2,d_{q,DT}(n)]$ code over $\FF_q$ is
\emph{DT-optimal}.
The values $d_{q,DT}(n)$ were determined in~\cite{SXS} for 
$n \le 40,30,14,24,24$ and $q =2,3,4,5,7$, respectively.

The following characterizations on double Toeplitz codes
are known.

\begin{prop}[Shi, Xu and Sol\'e~\cite{SXS}]\label{prop:SXS}
\begin{enumerate}
\item
A double Toeplitz code over $\FF_q$ is isodual.
\item
A double Toeplitz self-dual code over $\FF_q$ is double circulant
or double negacirculant.
\item
A binary double Toeplitz code with only even weights is double circulant.
\end{enumerate}
\end{prop}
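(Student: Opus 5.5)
The plan is to prove the three parts separately, each time working directly with the generator matrix $(I_n \mid T)$ of $\cT(t,a,b)$, where $T=T(t,a,b)$.

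\emph{Part 1.} The dual code $\cT(t,a,b)^\perp$ has generator matrix $(-T^{\mathsf T} \mid I_n)$. Permuting the two halves shows it is equivalent to the code generated by $(I_n \mid -T^{\mathsf T})$. To put $T^{\mathsf T}$ back into the original shape, let $J$ be the $n\times n$ reversal (anti-identity) permutation matrix. A Toeplitz matrix is persymmetric, that is, $JT^{\mathsf T}J=T$: the transpose swaps $a$ and $b$, and conjugation by $J$ swaps them back. Now multiply the generator matrix $(I_n \mid -T^{\mathsf T})$ on the left by $J$, which does not change the code. Then apply the monomial matrix $\mathrm{diag}(J,-J)$ on the right. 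This turns it into $(JJ \mid JT^{\mathsf T}J)=(I_n\mid T)$. Hence $\cT(t,a,b)^\perp\cong\cT(t,a,b)$, so the code is isodual. The only thing to verify carefully here is the entrywise identity $(JT^{\mathsf T}J)_{ij}=T_{n+1-j,\,n+1-i}$, which depends only on $i-j$, as the Toeplitz property requires.

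\emph{Part 2.} Self-duality of the code generated by $(I_n\mid T)$ is equivalent to $I_n+TT^{\mathsf T}=0$, i.e.\ $TT^{\mathsf T}=-I_n$. I will compare entries of $TT^{\mathsf T}$, whose $(i,j)$ entry is the inner product of rows $i$ and $j$ of $T$.
\begin{itemize}
\item The diagonal entries at positions $(1,1)$ and $(2,2)$ give
\[
t^2+\sum_k a_k^2=-1,\qquad b_1^2+t^2+\sum_{k\le n-2}a_k^2=-1,
\]
hence $b_1^2=a_{n-1}^2$. In general, comparing the consecutive diagonal entries $(i,i)$ and $(i+1,i+1)$ gives $b_i^2=a_{n-i}^2$.
\item This yields $b_i=\pm a_{n-i}$ for each $i$, but with the sign possibly depending on $i$. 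Pinning down a uniform sign is the main obstacle.
\item To get it, I will use the off-diagonal entries. Compare row-inner-products along the first row and first column, e.g.\ $(TT^{\mathsf T})_{1,j}=0$ against $(TT^{\mathsf T})_{2,j+1}=0$. The difference telescopes to $b_1b_j - a_{n-1}a_{n-j}=0$, and more generally to $b_ib_j=a_{n-i}a_{n-j}$ for all $i,j$.
\item Alternatively, I can use $T^{\mathsf T}T=-I$, which also holds because $T$ is invertible with $T^{-1}=-T^{\mathsf T}$. Comparing the two conditions gives the relations directly.
\end{itemize}
With $b_ib_j=a_{n-i}a_{n-j}$ in hand, there are two cases. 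If some $b_{i_0}\ne0$, then $b_{i_0}=\mu a_{n-i_0}$ with $\mu=\pm1$, and $b_j=\mu a_{n-j}$ follows for all $j$. If all $b_i=0$, then all $a_i=0$ as well, and $T=tI$ is both circulant and negacirculant. In either case $T$ is circulant or negacirculant, so the code is double circulant or double negacirculant.

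\emph{Part 3.} Over $\FF_2$, the rows of $(I_n\mid T)$ must all have even weight. So each row of $T$ has odd weight, and the row-weight parities give $b_i=a_{n-i}$ by differencing consecutive rows. Row $i+1$ of $T$ is obtained from row $i$ by dropping its last entry $a_{n-i}$ and prepending $b_i$. Equal parities of consecutive rows therefore force $b_i=a_{n-i}$ in $\FF_2$. This makes $T$ circulant, so the code is double circulant.
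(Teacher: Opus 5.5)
Your proof is correct. The paper itself gives no proof of this proposition; it quotes it from Shi, Xu and Sol\'e. Your Part~1 uses the same ingredients the paper relies on elsewhere:
\begin{itemize}
\item the persymmetry $QA^T=AQ$ of a Toeplitz matrix, with $Q$ the reversal matrix (your $J$), used in the proof of Lemma~\ref{lem:main};
\item the dual generator matrix $(-T^T \mid I_n)$, used in Lemma~\ref{lem:tba}, where isoduality is taken in the form $\cT(t,a,b)\cong\cT(-t,-b,-a)$.
\end{itemize}
So on the one part where a comparison is possible, you and the paper follow the same route.

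The paper does not reproduce Parts~2 and~3, and your arguments for them are sound and self-contained.
\begin{itemize}
\item In Part~2, compare the $(i,j)$ and $(i+1,j+1)$ entries of $TT^T=-I_n$; both equal $-\delta_{ij}$. Their difference is exactly $b_ib_j-a_{n-i}a_{n-j}$ for $1\le i,j\le n-1$. This one identity yields $b_i^2=a_{n-i}^2$. It also forces a uniform sign $\mu=\pm1$ once some $b_{i_0}\neq0$, giving $a_i=\mu b_{n-i}$. In the remaining case all $b_i=0$, so all $a_i=0$ and $T=tI_n$.
\item In Part~3, comparing the parities of consecutive rows of $T$ gives $b_i=a_{n-i}$ over $\FF_2$, so $T$ is circulant.
\end{itemize}

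Three cosmetic points:
\begin{itemize}
\item The relation comes from a single difference of two entries, not a telescoping sum.
\item In your comparison of rows~$1$ and~$2$, the $j=1$ instance comes from diagonal entries equal to $-1$, not from entries equal to $0$.
\item The alternative bullet invoking $T^TT=-I_n$ is vague as written and can simply be dropped.
\end{itemize}
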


Further background  on double Toeplitz codes and their generalizations can be found 
in~\cite{Cheng}, \cite{LSL}, \cite{LLZ}, \cite{LYLW}, \cite{SOXS} and \cite{WLLL}.

%
%

\section{Average weight enumerators of double Toeplitz codes}\label{Sec:awe}

Throughout this paper, 
let $\Omega_{q,n}$ denote the set of all distinct double Toeplitz $[n,n/2]$ codes over $\FF_q$.
The \emph{average weight enumerator} of double Toeplitz $[n,n/2]$ codes over $\FF_q$
is defined as follows:
\[
\Psi_{q,n}(y)=\frac{1}{|\Omega_{q,n}|}\sum_{C \in \Omega_{q,n}}W_C(y).
\]
In this section, we theoretically derive explicit expressions of the average weight 
enumerators $\Psi_{q,n}(y)$ 
without finding $\Omega_{q,n}$.

Let $A^T$ denote the transpose of a matrix $A$.
Let $\0_{n}$ denote the zero vector of length $n$.
For vectors  $u$ and $v$ of $\FF_q^{n/2}$,
let $\Omega_{q,n}^{(u,v)}$ denote the set of all distinct double Toeplitz $[n,n/2]$ codes 
over $\FF_q$ containing the vector $(u, v)$.

The following important work in~\cite{SXS} was the major inspiration for this paper.

\begin{thm}[{\cite[Proposition~5, Theorem~6]{SXS}}]\label{thm:SXS}
\begin{enumerate}
\item
$|\Omega_{q,n}|=q^{n-1}$.
\item
Let $u$ and $v$ be vectors of $\FF_q^{n/2}$.
If  $\wt(u) \ne 0$, then $|\Omega_{q,n}^{(u,v)}| \le  q^{n/2}$.
\end{enumerate}
\end{thm}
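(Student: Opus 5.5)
Write $m=n/2$, so a double Toeplitz $[n,m]$ code has generator matrix $(I_m \mid T(t,a,b))$ with $T(t,a,b)$ an $m\times m$ Toeplitz matrix.

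\textbf{Part 1.} Since the generator matrix is in systematic form, the code $\cT(t,a,b)$ determines $T(t,a,b)$ uniquely. The row of the code beginning with $e_i$ is $(e_i, e_iT)$, and no other codeword has that prefix. Hence distinct Toeplitz matrices give distinct codes. The Toeplitz matrix is determined freely by $t\in\FF_q$, $a\in\FF_q^{m-1}$ and $b\in\FF_q^{m-1}$. This gives $q^{1+2(m-1)}=q^{2m-1}=q^{n-1}$ codes, so $|\Omega_{q,n}|=q^{n-1}$.

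\textbf{Part 2.} The vector $(u,v)$ lies in $\cT(t,a,b)$ if and only if $v=uT(t,a,b)$, since the first $m$ coordinates of a codeword determine it. So we count the triples $(t,a,b)$ with $uT(t,a,b)=v$. The map $(t,a,b)\mapsto uT(t,a,b)$ is $\FF_q$-linear from $\FF_q^{2m-1}$ to $\FF_q^m$. The solution set of $uT=v$ is therefore either empty or a coset of the kernel. It suffices to show this map has rank $m$, i.e.\ is surjective, whenever $u\neq \0_m$. Then the kernel has dimension $2m-1-m=m-1$, so the count is at most $q^{m-1}\le q^{n/2}$, which gives the bound (in fact a sharper one).

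\textbf{Key step: surjectivity.} Write the $j$-th coordinate of $uT$ as $\sum_i u_i T_{ij}$. Here $T_{ij}$ equals $t$ if $i=j$, $a_{j-i}$ if $j>i$, and $b_{i-j}$ if $i>j$. Let $k$ be the largest index with $u_k\ne 0$, and consider how the variables $b_{k-1},\dots,b_1,t$ enter the coordinates $j=1,\dots,k$. Coordinate $j\le k$ contains the term $u_k b_{k-j}$ (or $u_k t$ when $j=k$). Every other variable $b_\ell$ appearing in coordinate $j$ has $\ell=i-j<k-j$, since $u_i=0$ for $i>k$. So in these variables the coordinates $1,\dots,k$ form a triangular system with diagonal entries $u_k\neq 0$.

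For coordinates $j>k$, the variable $a_{j-k}$ appears with coefficient $u_k$. Any other variable $a_\ell$ appearing in coordinate $j$ has $\ell=j-i>j-k$. Ordering $a_{1},\dots,a_{m-k}$ thus again gives a triangular structure with diagonal entries $u_k$.

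Combining these, the $m$ variables $b_{k-1},\dots,b_1,t,a_1,\dots,a_{m-k}$ yield an $m\times m$ submatrix of the linear map that is block triangular with nonzero diagonal. Hence the map has rank $m$. Checking this triangularity carefully, especially the cross terms between the $b$-block and the $a$-block, is the main obstacle. If needed, one can order the variables so that the combined matrix is genuinely triangular, by observing that coordinates $j\le k$ involve the $a$-variables only via $a_{j-i}$ with $i<j$, which is harmless after putting the $b$-rows first.
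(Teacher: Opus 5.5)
Your argument is correct and is essentially the paper's: in the proof of Lemma~\ref{lem:main}, which sharpens (ii) to $|\Omega_{q,n}^{(u,v)}|=q^{n/2-1}$, membership of $(u,v)$ is likewise turned into a linear system in the $n-1$ Toeplitz parameters whose coefficient matrix $D$ has rank $n/2$ when $u\neq\0_{n/2}$. The differences are minor: you work with $uT=v$ directly instead of going through the reversal identity $QA^T=AQ$, your triangular minor proves the rank claim that the paper only asserts, and your closing hedge is unnecessary, since $u_i=0$ for $i>k$ means coordinates $j>k$ contain no $b$- or $t$-variables, so with columns ordered $b_{k-1},\dots,b_1,t,a_1,\dots,a_{m-k}$ the whole $m\times m$ minor is upper triangular with every diagonal entry equal to $u_k$.
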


By following the same line as in the proof of~\cite[Theorem~6]{SXS},
we have the following improvement.
We give a proof for the sake of completeness.

\begin{lem}\label{lem:main}
Let $u$ and $v$ be vectors  of $\FF_q^{n/2}$.  Then
\begin{equation}\label{eq:NDT}
|\Omega_{q,n}^{(u,v)}|=
\begin{cases}
q^{n-1} & \text{ if } \wt(u)=0 \text{ and } \wt(v)=0,\\
0 & \text{ if }  \wt(u)=0 \text{ and }  \wt(v)\ne0,\\
q^{n/2-1} &  \text{ if } \wt(u)\ne0.
\end{cases}
\end{equation}
\end{lem}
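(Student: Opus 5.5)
Write $m=n/2$. A double Toeplitz code $\cT(t,a,b)$ is determined by the Toeplitz matrix $T=T(t,a,b)$, which has $2m-1=n-1$ free parameters $(t,a,b)\in\FF_q^{n-1}$. Distinct matrices give distinct codes, because the generator matrix $(I_m \mid T)$ is systematic. So $\Omega_{q,n}$ is in bijection with $\FF_q^{n-1}$, which is Theorem~\ref{thm:SXS}(1).

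The first step is to note that a vector $(u,v)$ with $u,v\in\FF_q^m$ lies in $\cT(t,a,b)$ if and only if $v=uT$. This holds because every codeword of the code is $x(I_m\mid T)=(x,xT)$ for some $x\in\FF_q^m$, and comparing first halves forces $x=u$. The first two cases follow at once. If $u=\0_m$, the condition reads $v=\0_m$. When $v=\0_m$ this holds for every code, giving $q^{n-1}$ codes. When $v\ne\0_m$ it holds for none, giving $0$.

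For $u\ne\0_m$ we must count the parameter vectors $(t,a,b)\in\FF_q^{n-1}$ with $uT(t,a,b)=v$. The map $(t,a,b)\mapsto uT(t,a,b)$ is $\FF_q$-linear from $\FF_q^{n-1}$ to $\FF_q^m$. The solution set is therefore either empty or a coset of the kernel. If the map is surjective, the kernel has dimension $(n-1)-m=m-1$, so every $v$ has exactly $q^{m-1}=q^{n/2-1}$ solutions. Everything thus reduces to showing that this linear map has rank $m$ whenever $u\ne\0_m$.

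That rank claim is the main step, and I would prove it as follows. Write $u=(u_1,\dots,u_m)$ and let $k$ be the largest index with $u_k\ne0$. The $j$-th coordinate of $uT$ is $\sum_i u_i T_{ij}$, where $T_{ij}$ equals $t$ if $i=j$, $a_{j-i}$ if $j>i$, and $b_{i-j}$ if $i>j$. I will exhibit, for each coordinate $j$, a parameter that appears in coordinate $j$ with coefficient $u_k$ and is absent from the coordinates $j'$ with $j'>j$; with the coordinates ordered as $j=m,m-1,\dots,1$ this makes the coefficient matrix triangular with nonzero pivots.
\begin{itemize}
\item For $j>k$ use $a_{j-k}$. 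Among indices $i\le k$ it occurs only via $i=k$ in column $j$, with coefficient $u_k$. In a column $j'>j$ it would need $i=j'-(j-k)>k$, where $u_i=0$.
\item For $j=k$ use $t$, which appears with coefficient $u_k$ in column $k$. It is absent from the columns $j'>k$, because there it would need $i=j'>k$, where $u_i=0$.
\item For $j<k$ use $b_{k-j}$, which appears in column $j$ via $i=k$ with coefficient $u_k$. In a column $j'>j$ it would need $i=j'+k-j>k$, where $u_i=0$.
\end{itemize}
Since the $m$ coordinate functionals are linearly independent, the map is surjective and the third case follows. The only care needed is the index bookkeeping in this step.
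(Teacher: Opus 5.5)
Your proposal is correct and follows essentially the same route as the paper. Both arguments reduce membership of $(u,v)$ to a linear system in the $n-1$ Toeplitz parameters $(t,a,b)$, whose $\tfrac{n}{2}\times(n-1)$ coefficient matrix has full rank $n/2$ when $u\neq\0_{n/2}$, giving $q^{n/2-1}$ solutions. The differences are minor but in your favour: you read off $v=uT$ directly from the systematic generator matrix instead of going through the parity-check matrix and the reversal matrix $Q$, and your pivot argument via the last nonzero coordinate $u_k$ actually proves the rank claim, which the paper only asserts for its matrix $D$.
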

\begin{proof}
Let us consider a
double Toeplitz $[n,n/2]$ code $\cT(t,a,b)$ over $\FF_q$.
Note that $\cT(t,a,b)$ has generator matrix
$
\left( \begin{array}{cc}
I_{n/2}  &A\\
\end{array}
\right)
$, where
\begin{align*}
A&=
\left( \begin{array}{cccccc}
t  &a_1&a_2& \cdots &a_{n/2-2} &a_{n/2-1}\\
b_1&t  &a_1& \cdots &a_{n/2-3}&a_{n/2-2} \\
b_2&b_1&t  & \cdots &a_{n/2-4}&a_{n/2-3} \\
\vdots &\vdots & \vdots &&\vdots& \vdots\\
\vdots &\vdots & \vdots &&\vdots& \vdots\\
b_{n/2-1}&b_{n/2-2}&b_{n/2-3}& \cdots&b_1&t  
\end{array}
\right),
\end{align*}
$a=(a_1,a_2,\ldots,a_{n/2-1})$  and $b=(b_1,b_2,\ldots,b_{n/2-1})$.
Determining the number $|\Omega_{q,n}^{(u,v)}|$
is equivalent to counting the triples $(t,a,b)$
such that double Toeplitz $[n,n/2]$ codes $\cT(t,a,b)$ contain the vector $(u, v)$.

Since
$
\left( \begin{array}{cc}
-A^T & I_{n/2}\\
\end{array}
\right)
$
is a parity-check matrix of $\cT(t,a,b)$, we have that
\begin{equation}\label{eq:uv1}
(u,v)\in \cT(t,a,b)\iff A^Tu^T=v^T.
\end{equation}
Let $Q$ be the $n/2 \times n/2$ matrix defined as follows:
\[Q=
 \begin{pmatrix} 
    0 & 0 & \cdots & 0& 0 & 0 & 1 &\\
    0 & 0 & \cdots & 0& 0 & 1 & 0 &\\
    0 & 0 & \cdots & 0& 1 & 0 & 0 &\\
    \vdots &\vdots&  &  \vdots& \vdots &\vdots& \vdots\\
    \vdots &\vdots&  & \vdots &\vdots  &\vdots & \vdots\\
    1  & 0 & \cdots & 0 & 0& 0 & 0 &\\
\end{pmatrix}.
\]
Since $QA^T=AQ$, we have that
\begin{equation}\label{eq:uv12}
A^Tu^T=v^T \iff QA^Tu^T=Qv^T \iff AQu^T=Qv^T.
\end{equation}
Let $u'^T$ and $v'^T$ be defined as follows:
\begin{align*}
u'^T&=Qu^T=(u_{n/2},u_{n/2-1},\dots,u_1)^T  \text{ and } \\
v'^T&=Qv^T=(v_{n/2},v_{n/2-1},\dots,v_1)^T.
\end{align*}
From~\eqref{eq:uv1} and \eqref{eq:uv12}, we have that
\begin{equation}\label{eq:uv2}
(u,v)\in \cT(t,a,b)\iff AQu^T=Qv^T \iff Au'^T=v'^T.
\end{equation}
Let $D$  be  the $n/2\times (n-1)$ matrix defined as follows:
\[
 D=
  \begin{pmatrix} 
  0 & \cdots & \cdots & \cdots & 0 & u_{n/2} & u_{n/2-1}  & \cdots & u_1\\
  0 & \cdots & \cdots & 0 & u_{n/2} & u_{n/2-1} & \cdots & u_1 & 0\\
  0 & \cdots &  0 & u_{n/2} & u_{n/2-1} & \cdots & u_1 & 0& 0\\
  \vdots &  &  &  &  &  &  &   \vdots & \vdots\\
  \vdots &  &  &  &  &  &  &   \vdots & \vdots\\
  u_{n/2} & u_{n/2-1} & \cdots & \cdots  & u_1 & 0 & \cdots & 0 & 0\\
  \end{pmatrix}.
\]
Let $c$  be defined as follows:
\[
c=(b_{n/2-1},b_{n/2-2},\ldots,b_1,t,a_1,a_2,\ldots,a_{n/2-1}).
\]
From $Au'^T=v'^T$,
we obtain the following system of $n/2$ equations:
\begin{equation}\label{eq:Dcv}
Dc^T=v'^T.
\end{equation}

From~\eqref{eq:uv2},
$|\Omega_{q,n}^{(u,v)}|$ is the number of solutions of~\eqref{eq:Dcv}.
If $\wt(u)\ne0$, then $\rank(D)=n/2$. 
Thus, the number of solutions is $q^{n/2-1}$.
If $\wt(u)=0$, then the number of solutions is $q^{n-1}$ when $\wt(v)=0$ 
and~\eqref{eq:Dcv} has no solution when $\wt(v)\ne 0$.
This completes the proof.
\end{proof}

\begin{rem}
After completing this work, we became aware of \cite[Lemma~3.2]{WLLL}, 
where $|\Omega_{q,n}^{(u,v)}|=q^{n/2-1}$  if $\wt(u)\ne0$ is stated.
\end{rem}


The above lemma provides a key idea for the following theorem, 
which constitutes one of the main results of this paper.

\begin{thm}\label{thm:awe}
Let $\Psi_{q,n}(y)$ denote the average weight enumerator of  all distinct 
double Toeplitz $[n,n/2]$ codes over $\FF_q$.
Then
\begin{multline*}
|\Omega_{q,n}|  \Psi_{q,n}(y)=q^{n-1}+q^{\frac{n}{2}-1}\left(\sum_{j=1}^{\frac{n}{2}}\left(\binom{n}{j}-\binom{\frac{n}{2}}{j}\right)(q-1)^jy^j \right.\\
\left.+\sum_{j=\frac{n}{2}+1}^{n}\binom{n}{j}(q-1)^jy^j\right).
\end{multline*}
\end{thm}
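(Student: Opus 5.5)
The plan is to swap the order of summation and count pairs (code, codeword) by codeword instead of by code. By definition,
\[
|\Omega_{q,n}|\Psi_{q,n}(y)=\sum_{C\in\Omega_{q,n}}\sum_{x\in C}y^{\wt(x)}=\sum_{x\in\FF_q^n}|\{C\in\Omega_{q,n} : x\in C\}|\,y^{\wt(x)}.
\]
Write each $x\in\FF_q^n$ as $x=(u,v)$ with $u,v\in\FF_q^{n/2}$. Then the inner count is exactly $|\Omega_{q,n}^{(u,v)}|$, which Lemma~\ref{lem:main} gives explicitly. The whole proof therefore reduces to splitting $\FF_q^n$ into the three cases of \eqref{eq:NDT} and counting vectors of each weight.

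The three cases contribute as follows.
\begin{itemize}
\item The vector $x=\0_n$ has $\wt(u)=\wt(v)=0$. It contributes $q^{n-1}y^0=q^{n-1}$, the constant term.
\item Vectors with $u=\0_{n/2}$ and $v\ne\0_{n/2}$ contribute nothing, since the lemma gives $0$ in this case.
\item Every vector with $u\neq\0_{n/2}$ contributes $q^{n/2-1}y^{\wt(x)}$.
\end{itemize}
Hence
\[
|\Omega_{q,n}|\Psi_{q,n}(y)=q^{n-1}+q^{n/2-1}\sum_{j\ge 1}N_j\,y^j,
\]
where $N_j$ is the number of vectors $(u,v)\in\FF_q^n$ of weight $j$ with $u\ne\0_{n/2}$.

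To compute $N_j$, take all vectors of weight $j$ in $\FF_q^n$, of which there are $\binom{n}{j}(q-1)^j$. Subtract those with $u=\0_{n/2}$; these are the weight-$j$ vectors of $\FF_q^{n/2}$ placed in the $v$-coordinates, and there are $\binom{n/2}{j}(q-1)^j$ of them. This second count is nonzero only for $j\le n/2$. So
\[
N_j=\left(\binom{n}{j}-\binom{\frac n2}{j}\right)(q-1)^j \quad\text{for } 1\le j\le n/2,
\qquad
N_j=\binom{n}{j}(q-1)^j \quad\text{for } n/2<j\le n.
\]
Substituting these values gives exactly the stated formula.

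None of these steps is a real obstacle once Lemma~\ref{lem:main} is available; all the substantive work (the rank of $D$) is in the lemma. The only point needing care is that the sum over codes is a sum over distinct codes. This is consistent with the lemma, because distinct triples $(t,a,b)$ give distinct matrices $A$, and hence distinct codes with generator matrix $(I_{n/2}\ A)$. The lemma's counts of triples are therefore counts of codes, matching $|\Omega_{q,n}|=q^{n-1}$ from Theorem~\ref{thm:SXS}.
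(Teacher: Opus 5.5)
Your proposal is correct and follows the paper's proof: you double count pairs (code, codeword), split $\FF_q^n$ according to whether $u$ and $v$ are zero, and apply Lemma~\ref{lem:main}; the only cosmetic difference is that you count the weight-$j$ vectors with $u\neq\0_{n/2}$ as a complement, whereas the paper splits them into $S_2(j)$ and $S_3(j)$ and adds their sizes. Your remark that distinct triples $(t,a,b)$ give distinct codes, because the systematic generator matrix $(I_{n/2}\ A)$ is unique, spells out a point the paper uses only implicitly when it identifies codes with triples.
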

\begin{proof}
Let $u$ and $v$ be vectors of $\mathbb{F}_q^{n/2}$.
Let $V(j)$ be defined as follows:
\[
V(j)=\{x\in \mathbb{F}_q^{n}\mid \wt(x)=j\}
\] 
and let $S_{i}(j)$ $(i=1,2,3)$ be defined as follows:
\begin{align*}
S_{1}(j)&=
\{(u,v)\in V(j)\mid \wt(u)=0 \text{ and }\wt(v)\ne0\}, \\
S_{2}(j)&=\{(u,v)\in V(j)\mid \wt(u)\ne0\text{ and } \wt(v)=0\} \text{ and } \\
S_{3}(j)&=\{(u,v)\in V(j)\mid \wt(u)\ne0\text{ and } \wt(v)\ne0\}.
\end{align*}
Then we have that 
\begin{equation}\label{eq:proof3}
V(j)=S_{1}(j)\sqcup S_{2}(j)\sqcup S_{3}(j)
\end{equation}
for $j \ge 1$.
If $1\le j\le n/2$, then we have that
\begin{equation}\label{eq:proof1}
|S_{i}(j)|=
\begin{cases}
\binom{\frac{n}{2}}{j}(q-1)^j &\text{ if } i=1,  \\
\binom{\frac{n}{2}}{j}(q-1)^j  &\text{ if } i=2, \\
\left(\binom{n}{j}-2\binom{\frac{n}{2}}{j}\right)(q-1)^j &\text{ if } i=3.
\end{cases}
\end{equation}
If $n/2+1\le j\le n$, then we have that
\begin{equation}\label{eq:proof2}
|S_{i}(j)|=
\begin{cases}
0 &\text{ if } i=1, \\
0 &\text{ if } i=2, \\
\binom{n}{j}(q-1)^j &\text{ if }i=3.
\end{cases}
\end{equation}
Note that $|\Omega_{q,n}|=q^{n-1}$ (Theorem~\ref{thm:SXS} (i)).
Hence, we have that
\allowdisplaybreaks
\begin{align*}
|\Omega_{q,n}|\Psi_{q,n}(y)
=\;&\sum_{j=0}^{n}\left(\sum_{C \in \Omega_{q,n}}|\{x\in C\mid \wt(x)=j\}|\right)y^j \\
=\;&\sum_{j=0}^{n}\left(\sum_{\substack{x\in V(j)}}|\{ {C \in \Omega_{q,n}}\mid x\in C\}|\right)y^j \\
=\;&|\{ {C \in \Omega_{q,n}}\mid \0_n \in C\}| \\
&
+\sum_{j=1}^{n}\left(
\sum_{x\in S_{1}(j)}|\{ {C \in \Omega_{q,n}}\mid x\in C\}|\right.
\\
&
+\sum_{x\in S_{2}(j)}|\{ {C \in \Omega_{q,n}}\mid x\in C\}| \\
&\left.
+\sum_{x\in S_{3}(j)}|\{ {C \in \Omega_{q,n}}\mid x\in C\}|\right)y^j 
\tag*{\text{(by~\eqref{eq:proof3})}}\\
=\;&
q^{n-1}+\sum_{j=1}^{n}\left(|S_{1}(j)|\cdot 0+|S_{2}(j)|q^{\frac{n}{2}-1}
+|S_{3}(j)|q^{\frac{n}{2}-1}\right)y^j 
\tag*{\text{(by~\eqref{eq:NDT})}}\\
=\;&q^{n-1}
+\sum_{j=1}^{\frac{n}{2}}\left(\binom{\frac{n}{2}}{j}(q-1)^jq^{\frac{n}{2}-1}\right.
\\&
\left.+\left(\binom{n}{j}-2\binom{\frac{n}{2}}{j}\right)(q-1)^jq^{\frac{n}{2}-1}\right)y^j
\tag*{\text{(by~\eqref{eq:proof1})}}\\
&+\sum_{j=\frac{n}{2}+1}^{n}\left(0\cdot q^{\frac{n}{2}-1}+\binom{n}{j}(q-1)^jq^{\frac{n}{2}-1}\right)y^j 
\tag*{\text{(by~\eqref{eq:proof2})}}\\
=\;&q^{n-1}+q^{\frac{n}{2}-1}\left(\sum_{j=1}^{\frac{n}{2}}\left(\binom{n}{j}-\binom{\frac{n}{2}}{j}\right)(q-1)^jy^j \right.\\
&\left.+\sum_{j=\frac{n}{2}+1}^{n}\binom{n}{j}(q-1)^jy^j\right).
\end{align*}
This completes the proof.
\end{proof}


As described above,
the average weight enumerators of self-dual codes are used to  establish the existence of
self-dual codes with specified minimum weights
(see e.g.,~\cite[Proof of Theorem~4]{C-S}).
In the same way, we establish the existence of double Toeplitz codes over $\FF_q$ with 
specified minimum weights as an application of Theorem~\ref{thm:awe}.
For this purpose, just as in the case of self-dual codes, 
the following proposition is required.
Although the following proposition is somewhat trivial, we give a proof for the sake of completeness.

\begin{prop}\label{prop:awe}
If we write
\[
\Psi_{q,n}(y) = \frac{1}{|\Omega_{q,n}|}\sum_{i=0}^{n}\psi_{q,n,i}y^i
\]
and we suppose that
\[
\sum_{i=1}^{d-1}\psi_{q,n,i} < (q-1)|\Omega_{q,n}|.
\]
Then there is a double Toeplitz $[n,n/2]$ code over $\FF_q$ with
minimum weight at least $d$.
\end{prop}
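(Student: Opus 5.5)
We argue by contradiction, using a double-counting argument combined with the fact that nonzero codewords come in groups of $q-1$ scalar multiples. By the definition of $\Psi_{q,n}(y)$, the coefficient $\psi_{q,n,i}$ equals
\[
\psi_{q,n,i}=\sum_{C\in\Omega_{q,n}}|\{x\in C\mid \wt(x)=i\}|,
\]
that is, the total number of codewords of weight $i$, summed over all $C\in\Omega_{q,n}$. Consequently $\sum_{i=1}^{d-1}\psi_{q,n,i}$ counts the pairs $(C,x)$ with $C\in\Omega_{q,n}$ and $x\in C$ a nonzero codeword of weight less than $d$.

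Suppose, contrary to the claim, that every code $C\in\Omega_{q,n}$ has minimum weight at most $d-1$. Then each $C$ contains a nonzero codeword $x$ with $\wt(x)\le d-1$. Since $C$ is a linear subspace, the $q-1$ vectors $\lambda x$ with $\lambda\in\FF_q^{*}$ also lie in $C$. These vectors are pairwise distinct because $x\ne\0_n$, and each has weight $\wt(x)$. Hence every $C\in\Omega_{q,n}$ contributes at least $q-1$ to the count above, and
\[
\sum_{i=1}^{d-1}\psi_{q,n,i}\ \ge\ (q-1)|\Omega_{q,n}|.
\]
This contradicts the hypothesis. Therefore some double Toeplitz $[n,n/2]$ code has no nonzero codeword of weight at most $d-1$, which means its minimum weight is at least $d$. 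All codes in $\Omega_{q,n}$ are $[n,n/2]$ codes, so the code we obtain has the required parameters.

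The argument presents no real obstacle. The only point that needs care is the factor $q-1$: it comes from the distinct nonzero scalar multiples, which is what allows the sharper threshold $(q-1)|\Omega_{q,n}|$ in place of $|\Omega_{q,n}|$.
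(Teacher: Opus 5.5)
Your proof is correct and is essentially the paper's argument: both interpret $\psi_{q,n,i}$ as the number of pairs $(C,x)$ with $x\in C$ of weight $i$, and both use the $q-1$ distinct nonzero scalar multiples of a minimum-weight codeword. The only difference is presentation. The paper bounds $(q-1)\sum_{i=1}^{d-1}|\Omega_{q,n,i}|$ by $\sum_{i=1}^{d-1}\psi_{q,n,i}$ and concludes $\sum_{i=1}^{d-1}|\Omega_{q,n,i}|<|\Omega_{q,n}|$ directly, whereas you argue by contradiction.
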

\begin{proof}
Let $\Omega_{q,n,d}$ be the set of all distinct double Toeplitz $[n,n/2,d]$ codes over $\FF_q$.
Let $V(j)$ be defined as follows:
\[
V(j)=\{x\in \mathbb{F}_q^{n}\mid \wt(x)=j\}.
\]
For any code $C$ over $\FF_q$, it is trivial that $x$ is a codeword of weight $d$ in $C$ if and only if
$\alpha x$ is a codeword of weight $d$ in $C$ for $\alpha \in \FF_q\setminus \{0\}$.
This gives that
\[
(q-1)|\Omega_{q,n,d}| \le \sum_{x\in V(d)}|\{C\in\Omega_{q,n}\mid x\in C\}|.
\]
Since $\sum_{i=1}^{d-1}\psi_{q,n,i} < (q-1)|\Omega_{q,n}|$, 
we have that
\begin{align*}
(q-1)|\Omega_{q,n}|&
>\sum_{i=1}^{d-1}\psi_{q,n,i}\\&
=\sum_{i=1}^{d-1}\sum_{C\in\Omega_{q,n}}|\{x\in C \mid {\mathop{\mathrm{wt}}\nolimits}(x)=i\}|\\&
=\sum_{i=1}^{d-1}\sum_{x\in V(i)}|\{C\in\Omega_{q,n}\mid x\in C\}|\\&
\ge (q-1)\sum_{i=1}^{d-1}|\Omega_{q,n,i}|.
\end{align*}
This gives that $\sum_{i=1}^{d-1}|\Omega_{q,n,i}| <  |\Omega_{q,n}|$.
Then there must be  a
double Toeplitz $[n,n/2]$ code over $\FF_q$ with
minimum weight at least $d$.
The result follows.
\end{proof}

\section{Application of average weight enumerators}\label{Sec:app}

In this section and the next section, 
we present numerical results for
double Toeplitz codes over $\FF_2$, $\FF_3$ and  $\FF_4$
obtained by computer calculations.
All computer calculations in these sections
were done using programs in \textsc{Magma}~\cite{Magma}.

In this section, 
we consider the existence of double Toeplitz codes with specified minimum weights
as an application of Theorem~\ref{thm:awe}.
In particular, we determine the lengths for which there is a double Toeplitz code
over $\FF_2$ and $\FF_3$
with minimum weight at least $d$ for $d\le 10$.
We also determine the lengths for which there is a double Toeplitz code
over $\FF_4$
with minimum weight at least $d$ for $d\le 9$.

\subsection{Preparations}

Throughout this section and the next section,
assume that $q\in \{2,3,4\}$ unless otherwise specified.
As usual, we take 
$\FF_{2}$ to be $\{0,1\}$, $\FF_{3}$ to be $\{0,1,2\}$ and
$\FF_{4}$ to be $\{0,1,\ww,\vv\}$, where $\ww^2 = \ww +1$.
In addition,
using the notations $\cC(r)$, $\cN(r)$ and $\cT(t,a,b)$,
we present constructions of double circulant codes, double negacirculant codes  and double Toeplitz codes,
respectively
(see Section~\ref{sec:DTC} for the notations).


\begin{lem}
Suppose that $q \in \{2,3,4\}$.
\begin{enumerate}
\item
The double circulant $[n,n/2]$ code $\cC((0,0,\ldots,0))$ over $\FF_q$
has minimum weight
$1$ for all positive even integers $n$.
\item
The double circulant $[n,n/2]$ code $\cC((1,0,0,\ldots,0))$ over $\FF_q$
has minimum weight
$2$ for all positive even integers $n$.
\item
The double circulant $[n,n/2]$ code $\cC((1,1,0,0,\ldots,0))$  over $\FF_q$
has minimum weight $3$ for all even integers $n$ with $n \ge 6$.
The ternary double negacirculant  $[4,2]$ code $\cN((1,1))$
has minimum weight $3$.
The quaternary double circulant $[4,2]$ code $\cC((1,\ww))$
has minimum weight $3$.
\item
The double circulant $[n,n/2]$ code $\cC((0,1,1,\ldots,1))$  over $\FF_q$ 
has minimum weight $4$  for all even integers $n$ with $n \ge 8$.
The quaternary double circulant $[6,3]$ code $\cC((1,\ww,1))$
has minimum weight $4$.
\end{enumerate}
\end{lem}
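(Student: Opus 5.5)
The plan is direct: write $m=n/2$ and let $A$ be the $m\times m$ circulant (or negacirculant) matrix with the given first row. Every codeword then has the form $(x, xA)$ with $x\in\FF_q^m$, and its weight is $\wt(x)+\wt(xA)$. For each item I will prove a lower bound on this weight for $x\neq \0_m$ and then exhibit a codeword attaining it. Throughout I write $e_i$ for the $i$th unit vector of $\FF_q^m$ and $\mathbf{1}$ for the all-ones vector.

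For (i), $A=0$, so every nonzero codeword has weight $\wt(x)\ge 1$, with equality for $x=e_1$. For (ii), $A=I_m$, so the weight is $2\wt(x)\ge 2$, again attained at $x=e_1$.

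For (iii), $A=I_m+P$, where $P$ is the cyclic shift matrix. Fix $x\neq \0_m$.
\begin{itemize}
\item If $\wt(x)=1$, then $xA$ has exactly two nonzero entries because $m\ge 3$, so the weight is exactly $3$. This gives attainment.
\item If $\wt(x)\ge 2$ and $xA\ne \0_m$, the weight is already at least $3$.
\item If $xA=\0_m$, the condition is $x_i+x_{i-1}=0$ for all $i$ (indices cyclic). Hence $x$ is a nonzero multiple of the alternating vector, all its coordinates are nonzero, and the weight is $m\ge 3$.
\end{itemize}
The two exceptional $[4,2]$ codes have only $q^2-1$ nonzero codewords ($8$ ternary, $15$ quaternary). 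Up to scalars this is $4$ and $5$ vectors respectively, which I will check by hand from $\bigl(\begin{smallmatrix}1&1\\-1&1\end{smallmatrix}\bigr)$ and $\bigl(\begin{smallmatrix}1&\ww\\ \ww&1\end{smallmatrix}\bigr)$.

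For (iv), $A=J-I_m$, with $J$ the all-ones matrix, so $xA=s\mathbf{1}-x$ where $s=\sum_i x_i$.
\begin{itemize}
\item If $s=0$, then $xA=-x$ and the weight is $2\wt(x)$. Since $\wt(x)=1$ would force $s\ne0$, we get $\wt(x)\ge 2$ and weight at least $4$.
\item If $s\neq 0$, then $xA$ is nonzero at each of the $m-\wt(x)$ coordinates where $x_i=0$. Hence the weight is at least $\wt(x)+(m-\wt(x))=m\ge 4$.
\end{itemize}
Attainment comes from $x=e_1-e_2$, which has $s=0$ and weight $4$. The quaternary $[6,3]$ code $\cC((1,\ww,1))$ has only $21$ codewords up to scalar, so I will verify it by direct enumeration. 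A short case analysis on $\wt(x)\in\{1,2,3\}$ with $A=\bigl(\begin{smallmatrix}1&\ww&1\\1&1&\ww\\ \ww&1&1\end{smallmatrix}\bigr)$ would also do.

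I expect the main obstacle to be the kernel case $xA=\0_m$ in (iii), which must be handled uniformly in $q$ and in the parity of $m$. It becomes clean once one observes that the recursion $x_i=-x_{i-1}$ propagates a single nonzero coordinate to all $m$ coordinates. The remaining work is routine finite checking of the small sporadic codes.
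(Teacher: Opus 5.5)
Your proposal is correct, and it is the direct verification the paper has in mind when it calls the proof straightforward and omits it. The decompositions $A=I_m+P$ and $A=J-I_m$ handle the infinite families uniformly in $q$. The finite checks you promise also go through: for $\cN((1,1))$ and $\cC((1,\ww))$, every nonzero codeword up to scalars has weight $3$ or $4$, with $3$ attained. For $\cC((1,\ww,1))$, any two rows of $A$ have pairwise distinct coordinate ratios, and $\det A=\ww^2\neq 0$.

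One small point: in (iii) the weight-one case only needs $m\ge 2$. The hypothesis $m\ge 3$ is used only in the kernel case, and that is exactly where $n=4$ fails: $x=(1,-1)$ gives a weight-$2$ codeword of $\cC((1,1))$.
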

\begin{proof}
The proof is straightforward and is omitted.
\end{proof}

It is known that there is no binary $[4,2,3]$ code and
there is no $[6,3,4]$ code over $\FF_q$ for $q\in\{2,3\}$ (see~\cite{Grassl}).
Hence, we determine the lengths for which there is a double Toeplitz code
over $\FF_q$ with minimum weight $d$ for 
$q\in\{2,3,4\}$ and $d\le 4$.

\begin{table}[thbp]
\caption{Values $n_{2}(d)$, $n_{3}(d)$ and $n_{4}(d)$ for $5 \le d \le 10$}
\label{Tab:awe2-4}
\centering
\medskip
{\small
\begin{tabular}{c|c|c|c}
\noalign{\hrule height1pt}
$d$ & $n_{2}(d)$  & $n_{3}(d)$ & $n_{4}(d)$\\
\hline
 5 & 30 & 20 & 16 \\
 6 & 40 & 26 & 22 \\
 7 & 48 & 32 & 26 \\
 8 & 56 & 38 & 32 \\
 9 & 66 & 44 & 38 \\
10 & 74 & 50 & 42 \\
\noalign{\hrule height1pt}
\end{tabular}
}
\end{table}

Now we consider the existence of double Toeplitz codes over $\FF_q$
with minimum weight at least $d$ for $q\in \{2,3,4\}$ and 
$5 \le d \le 10$.
For these $q$ and $d$,
by Theorem~\ref{thm:awe} and Proposition~\ref{prop:awe},
there is a double Toeplitz $[n,n/2,d]$ code over $\FF_q$ with minimum weight at least $d$
if $n \ge n_{q}(d)$,
where the value $n_{q}(d)$ is listed in Table~\ref{Tab:awe2-4}.
Here we investigate the existence of 
double Toeplitz $[n,n/2,d]$ codes over $\FF_q$ for $n < n_{q}(d)$ explicitly.

\subsection{Binary double Toeplitz codes}

\begin{itemize}
\item $d=5$:
There is a binary double circulant $[16,8,5]$ code~\cite[Fig.~16.7]{MS-book}.
For $n=18,20,\ldots,28$, 
we verified that the binary double circulant $[n,n/2]$ code $\cC(r)$
has minimum weight $5$, where
\[
r=(1,1,1,0,1,0,0,\ldots,0).
\]

It is known that there is no binary $[n,n/2,5]$ code if $n \le 14$ (see~\cite{Grassl}).

\item $d=6$:
For $n=18,20,\ldots,28,34$,
there is a binary double circulant $[n,n/2,6]$ code~\cite{C-S}, \cite{GH97} and \cite[Fig.~16.7]{MS-book}.
For $n=30,32,36,38$, we verified that 
the binary double circulant $[n,n/2]$ code $\cC(r)$
has minimum weight $6$, where
\[
r=(1,1,1,1,0,1,0,0,\ldots,0).
\]

It is known that there is no binary $[n,n/2,6]$ code if $n \le 16$ (see~\cite{Grassl}).

\item $d=7$:
For $n=22,26$, 
there is a binary double circulant $[n,n/2,7]$ code~\cite[Fig.~16.7]{MS-book}.
We verified that 
the binary double circulant $[28,14]$ code $\cC(r)$
has minimum weight $7$, where
\begin{align*}
r=
(1,0,1,1,1,1,0,0,1,0,0,0,0,0).
\end{align*}
For $n=30,32,\ldots,46$, we verified that 
the binary double circulant $[n,n/2]$ code $\cC(r)$
has minimum weight $7$, where
\[
r=(1,1,1,0,1,1,0,1,0,0,\ldots,0).
\]
We verified that the binary double Toeplitz  $[24,12]$ code $\cT(0,a,b)$
has minimum weight $7$, where
\begin{align*}
a&=( 1, 1, 1, 1, 0, 1, 1, 0, 0, 0, 0) \text{ and }\\
b&=( 1, 0, 0, 0, 1, 1, 0, 1, 1, 1, 1).
\end{align*}

It is known that there is no binary $[n,n/2,7]$ code if $n \le 20$ (see~\cite{Grassl}).

\item $d=8$:
It is well known that the binary extended Golay $[24,12,8]$ code is constructed as double circulant
(see~\cite[Fig.~16.4]{MS-book}).
For $n=28,30,\ldots,40,44$, 
there is a binary double circulant $[n,n/2,8]$ code~\cite{C-S}, \cite{GH97}, \cite{HGK} and \cite[Fig.~16.7]{MS-book}.
For $n=42,46,48,\ldots,54$, we verified that 
the binary double circulant $[n,n/2]$ code $\cC(r)$
has minimum weight $8$, where
\[
r=(1,1,1,1,0,1,1,0,1,0,0,\ldots,0).
\]

It is known that there is no binary $[n,n/2,8]$ code if $n \le 22$ and $n=26$ (see~\cite{Grassl}).

\item $d=9$:
There is a binary double circulant $[40,20,9]$ code~\cite[Fig.~16.7]{MS-book}.
The binary double circulant $[42,21]$ code $\cC(r)$
has minimum weight $9$, where
\[
r=(1, 1, 0, 1, 0, 1, 1, 1, 1, 0, 0, 1, 0, 0, 0, 0, 0, 0, 0, 0, 0).
\]
For $n=44,46,\ldots,64$, we verified that 
the binary double circulant $[n,n/2]$ code $\cC(r)$
has minimum weight $9$, where
\[
r=(1,1,1,0,1,1,1,0,0,1,0,1,0,0,\ldots,0).
\]

It is known that there is no binary $[n,n/2,9]$ code if $n \le 36$ 
(see~\cite{Grassl}).
In addition, there is no binary double Toeplitz $[38,19,9]$ code~\cite[Table~1]{SXS}.

\item $d=10$:
For $n=42,44,\ldots,54,58$,
there is a binary double circulant $[n,n/2,10]$ code~\cite{C-S}, \cite{GH97}, \cite{HGK} and \cite[Fig.~16.7]{MS-book}.
For $n=56,60,62,\ldots,72$, we verified that 
the binary double circulant $[n,n/2]$ code $\cC(r)$
has minimum weight $10$, where
\[
r=(1,1,0,1,0,1,1,1,1,1,0,0,1,0,0,\ldots,0).
\]

It is known that there is no binary $[n,n/2,10]$ code if $n \le 38$
(see~\cite{Grassl}).
In addition, there is no binary double Toeplitz $[40,20,10]$ code~\cite[Table~1]{SXS}.

\end{itemize}

Therefore, we have the following:

\begin{prop}\label{prop:minwt-F2}
\begin{enumerate}
\item
A binary double Toeplitz $[n,n/2]$ code with minimum weight at least $5$
exists precisely for $n \ge 16$.

\item
A binary double Toeplitz $[n,n/2]$ code with minimum weight at least $6$
exists precisely for $n \ge 18$.

\item
A binary double Toeplitz $[n,n/2]$ code with minimum weight at least $7$
exists precisely for $n \ge 22$.

\item
A binary double Toeplitz $[n,n/2]$ code with minimum weight at least $8$
exists precisely for $n=24$ and $n \ge 28$.

\item
A binary double Toeplitz $[n,n/2]$ code with minimum weight at least $9$
exists precisely for $n \ge 40$.

\item
A binary double Toeplitz $[n,n/2]$ code with minimum weight at least $10$
exists precisely for $n \ge 42$.
\end{enumerate}
\end{prop}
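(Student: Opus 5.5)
The plan is to assemble each part of Proposition~\ref{prop:minwt-F2} from three ingredients: the asymptotic existence guaranteed by Theorem~\ref{thm:awe} together with Proposition~\ref{prop:awe}, explicit constructions for the intermediate lengths, and nonexistence results for the small lengths. The one structural point to record first is monotonicity in $d$. A code with minimum weight exactly $d'\ge d$ has minimum weight at least $d$. Hence, for each $d$, existence at a length $n$ may be certified by any double Toeplitz code of minimum weight $d'\ge d$ at that length. Double circulant codes are double Toeplitz, since $T(t,a,b)$ is circulant when $a_i=b_{n-i}$, so every double circulant example listed above counts.

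\textbf{Upper range.} For $n\ge n_2(d)$ (Table~\ref{Tab:awe2-4}), I would evaluate the coefficients $\psi_{2,n,i}$ from Theorem~\ref{thm:awe}, using $|\Omega_{2,n}|=2^{n-1}$. I would then check that
\[
\sum_{i=1}^{d-1}\psi_{2,n,i}<2^{n-1}.
\]
Once this inequality holds at $n=n_2(d)$, it persists for larger $n$. The left side is $2^{n/2-1}$ times a polynomial in $n$, while the right side is $2^{n-1}$, so the ratio decreases once $n$ is moderately large; a short computation confirms this from $n_2(d)$ on. Proposition~\ref{prop:awe} then gives a code with minimum weight at least $d$ for every $n\ge n_2(d)$.

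\textbf{Intermediate range.} For each $d$ and each even $n$ between the claimed threshold and $n_2(d)-2$, I would point to an explicit code from the bulleted list, either with minimum weight exactly $d$ or with a larger minimum weight. For example, in part (i) the length $30$ is already covered by the bound and the lengths $16,\ldots,28$ are covered by the listed $d=5$ codes. In part (iv), the extended Golay code handles $n=24$, and the $d=8$ list handles $28$ through $54$. The main bookkeeping task is to confirm that every even length up to $n_2(d)-2$ appears in the relevant list or in the list for a larger $d$. For instance, in part (iii) the lengths $22$ through $46$ are covered, with $24$ covered by the $\cT(0,a,b)$ example (or by Golay), and $48=n_2(7)$ comes from the bound.

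\textbf{Nonexistence.} For lengths below each threshold, I would invoke the cited bounds on general $[n,n/2]$ codes from Grassl's tables; these apply in particular to double Toeplitz codes. Where those bounds are not enough, I would use the double Toeplitz classification in \cite[Table~1]{SXS}. This happens for $[38,19,9]$ and $[40,20,10]$. It also matters for the $n=26$ gap in part (iv), which Grassl already excludes for $d=8$. For part (v) at $n=38$, I need to exclude minimum weight at least $9$, which includes minimum weight $10$. Since $d_{2,DT}(38)<9$ according to \cite{SXS}, both cases are handled at once.

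I expect the main obstacle to be the careful verification of the covering, in particular in parts (iv)--(vi). The threshold statements there need both that the largest minimum weights from \cite{SXS} match the claimed gaps and that the bound from Proposition~\ref{prop:awe} genuinely holds at $n_2(d)$. I would do the coefficient computation by computer and cross-check the ``at least $d$'' interpretation at each boundary length.
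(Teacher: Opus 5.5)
Your proposal is correct and follows the paper's argument. It uses the averaging bound from Theorem~\ref{thm:awe} and Proposition~\ref{prop:awe} for $n\ge n_2(d)$, the explicit (mostly double circulant) codes in the bulleted list for intermediate lengths, and Grassl's tables together with \cite[Table~1]{SXS} for nonexistence, including $n=38$ for $d\ge 9$ and $n=40$ for $d\ge 10$. Your remark that the inequality $\sum_{i=1}^{d-1}\psi_{2,n,i}<2^{n-1}$ continues to hold for all $n\ge n_2(d)$ fills in a step the paper leaves implicit.
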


\subsection{Ternary double Toeplitz codes}\label{sec:T}

\begin{itemize}
\item $d=5$:
For $n=10,12$,
there is a ternary double circulant $[n,n/2,5]$ code~\cite{DGH} and \cite{GH2017}.
For $n=14,16,18$, 
we verified that the ternary double circulant $[n,n/2]$ code $\cC(r)$
has minimum weight $5$, where
\[
r=(1,2,1,1,0,0,\ldots,0).
\]
The existence of a ternary double Toeplitz $[10,5,5]$ code is also known~\cite[Table~2]{SXS}.

It is known that there is no ternary $[n,n/2,5]$ code if $n \le 8$ (see~\cite{Grassl}).

\item $d=6$:
The ternary extended Golay $[12,6,6]$ code is constructed as  
the double negacirculant code $\cN(r)$, where
\[
r=(1, 2, 1, 1, 1, 0).
\]
For $n=14,16,18$,
there is a ternary double circulant $[n,n/2,6]$ code~\cite{DGH}.
For $n=20,22,24$, 
we verified that the ternary double circulant $[n,n/2]$ code $\cC(r)$
has minimum weight $6$, where
\[
r=(1,2,2,1,1,0,0,0).
\]
For $n=12,14,16,18$,
the existence of a ternary double Toeplitz $[n,n/2,6]$ code is also known~\cite[Table~2]{SXS}.

It is known that there is no ternary $[n,n/2,6]$ code if $n \le 10$ (see~\cite{Grassl}).

\item $d=7$:
There is a ternary double circulant $[20,10,7]$ code~\cite{DGH}.
For $n=22,24,\ldots,30$, 
we verified that the ternary double circulant $[n,n/2]$ code $\cC(r)$
has minimum weight $7$, where
\[
r=(1,2,1,1,2,0,1,0,0,\ldots,0).
\]
The existence of a ternary double Toeplitz $[20,10,7]$ code is also known~\cite[Table~2]{SXS}.

It is known that there is no ternary $[n,n/2,7]$ code if $n \le 18$ (see~\cite{Grassl}).

\item $d=8$:
For $n=22,24,26$,
there is a ternary double circulant $[n,n/2,8]$ code~\cite{DGH} and \cite{GH2017}.
For $n=28,30,\ldots,36$, 
we verified that the ternary double circulant $[n,n/2]$ code $\cC(r)$
has minimum weight $8$, where
\[
r=(1, 1, 1, 2, 1, 1, 0, 1, 0, 0, \ldots, 0).
\]
For $n=22,26,28$,
the existence of a ternary double Toeplitz $[n,n/2,8]$ code is also known~\cite[Table~2]{SXS}.

It is known that there is no ternary $[n,n/2,8]$ code if $n \le 20$ (see~\cite{Grassl}).

\item $d=9$:
The two inequivalent ternary self-dual $[24,12,9]$ codes are constructed as
double negacirculant~\cite{HHKK}.
For $n=28,30,32$,
there is a ternary double circulant $[n,n/2,9]$ code~\cite{DGH} and \cite{GH2017}.
For $n=34,36,\ldots,42$, 
we verified that the ternary double circulant $[n,n/2]$ code $\cC(r)$
has minimum weight $9$, where
\[
r=(1, 2, 1, 1, 1, 2, 0, 1, 1, 0, 0,\ldots,0).
\]
For $n=24,30$,
the existence of a ternary double Toeplitz $[n,n/2,9]$ code is also known~\cite[Table~2]{SXS}.

It is known
that there is no ternary $[n,n/2,9]$ code if $n \le 22$
(see~\cite{Grassl}).
In addition, there is no ternary double Toeplitz $[26,13,9]$ code~\cite[Table~2]{SXS}.

\item $d=10$:
For $n=32,36$, 
there is a ternary double circulant $[n,n/2,10]$ code~\cite{GH2017}.
We verified that the ternary double circulant $[34,17]$ code $\cC(r)$
has minimum weight $10$, where
\begin{align*}
r=(1, 1, 1, 1, 0, 1, 2, 1, 0, 1, 1, 0, 0, 0, 0,0,0).
\end{align*}
For $n=38,40$, 
we verified that the ternary double circulant $[n,n/2]$ code $\cC(r)$
has minimum weight $10$, where
\[
r=(1, 2, 2, 1, 2, 1, 0, 1, 1, 0, 1, 0, 0,\ldots,0).
\]
For $n=42,44,46,48$, 
we verified that the ternary double circulant $[n,n/2]$ code $\cC(r)$
has minimum weight $10$, where
\[
r=(1, 1, 2, 2, 1, 1, 0, 1, 1, 0, 1, 0, 0,\ldots,0).
\]

It is known
 that there is no ternary $[n,n/2,10]$ code if $n \le 26$
(see~\cite{Grassl}).
In addition, there is no ternary double Toeplitz $[n,n/2,10]$ code 
for $n=28,30$~\cite[Table~2]{SXS} (see also Proposition~\ref{prop:F3-28}).

\end{itemize}

Therefore, we have the following:

\begin{prop}\label{prop:minwt-F3}
\begin{enumerate}
\item
A ternary double Toeplitz $[n,n/2]$ code with minimum weight at least $5$
exists precisely for $n \ge 10$.
\item
A ternary double Toeplitz $[n,n/2]$ code with minimum weight at least $6$
exists precisely for $n \ge 12$.
\item
A ternary double Toeplitz $[n,n/2]$ code with minimum weight at least $7$
exists precisely for $n \ge 20$.
\item
A ternary double Toeplitz $[n,n/2]$ code with minimum weight at least $8$
exists precisely for  $n \ge 22$.
\item
A ternary double Toeplitz $[n,n/2]$ code with minimum weight at least $9$
exists precisely for  $n=24$ and $n \ge 28$.

\item
A ternary double Toeplitz $[n,n/2]$ code with minimum weight at least $10$
exists precisely for $n \ge 32$.
\end{enumerate}
\end{prop}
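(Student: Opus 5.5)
The proof combines three ingredients for each $d\in\{5,\dots,10\}$: an averaging argument for all large lengths, explicit codes for the intermediate lengths, and known nonexistence results for the small lengths. Throughout, I use that a code with minimum weight exactly $d$ also has minimum weight at least $d'$ for every $d'\le d$. So a code listed under a larger $d$ can cover a gap in the list for a smaller $d$.

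\emph{Large lengths.} By Theorem~\ref{thm:awe}, for $1\le i\le n/2$ we have
\[
\psi_{3,n,i}=3^{n/2-1}\left(\binom{n}{i}-\binom{n/2}{i}\right)2^i,
\qquad |\Omega_{3,n}|=3^{n-1}.
\]
Proposition~\ref{prop:awe} then gives a code with minimum weight at least $d$ whenever
\[
\sum_{i=1}^{d-1}\left(\binom{n}{i}-\binom{n/2}{i}\right)2^i<2\cdot 3^{n/2}.
\]
The left side is polynomial in $n$ and the right side is exponential. So once the inequality holds at $n=n_3(d)$, I check by a ratio argument that it persists for all larger even $n$: going from $n$ to $n+2$ multiplies the right side by $3$ and the left side by at most roughly $((n+2)/(n+2-d))^{d-1}$, which is less than $3$ for $n\ge n_3(d)$. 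This gives existence for $n\ge n_3(d)$, with $n_3(d)=20,26,32,38,44,50$ as in Table~\ref{Tab:awe2-4}.

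\emph{Intermediate lengths.} For each $d$ I collect the double circulant, double negacirculant or double Toeplitz codes listed in the subsection before the statement; all of these are double Toeplitz. Their lengths together cover every even $n$ from the claimed threshold up to $n_3(d)-2$. The coverage is as follows.
\begin{itemize}
\item $d=5$: lengths $10$--$18$.
\item $d=6$: lengths $12$--$24$.
\item $d=7$: lengths $20$--$30$, from the weight-$7$ codes.
\item $d=8$: lengths $22$--$36$.
\item $d=9$: lengths $24$ and $28$--$42$.
\item $d=10$: lengths $32$--$48$.
\end{itemize}
For example, for $d=7$ the threshold is $n_3(7)=32$, so lengths $20$--$30$ are exactly what is needed. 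For $d=9$, the length $26$ is deliberately excluded.

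\emph{Nonexistence.} I rely on the bounds of~\cite{Grassl}, namely that no ternary $[n,n/2,d]$ code exists for $n\le 8,10,18,20,22,26$ when $d=5,\dots,10$ respectively. Since a code with minimum weight at least $d$ would contradict these bounds, they give nonexistence below each threshold. Each threshold is then exactly the next even length: $10,12,20,22,24,32$. The remaining small cases come from the classification in~\cite[Table~2]{SXS}. For $d=9$, it shows there is no double Toeplitz $[26,13,9]$ code; there is also no $[26,13,\ge10]$ code, by the $d=10$ bound of~\cite{Grassl}. For $d=10$, it shows $d_{3,DT}(28)<10$ and $d_{3,DT}(30)<10$, which excludes $n=28,30$.

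\emph{Main obstacle.} The only genuinely delicate step is confirming that the inequality of Proposition~\ref{prop:awe} holds for \emph{every} even $n\ge n_3(d)$, not just at $n_3(d)$. The ratio argument above handles this. Everything else consists of citations and computer verifications already recorded before the statement.
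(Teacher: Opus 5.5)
Your plan has the same structure as the paper's proof. Existence for $n\ge n_3(d)$ comes from Theorem~\ref{thm:awe} and Proposition~\ref{prop:awe}, the explicit codes listed before the statement cover the intermediate lengths (your coverage ranges are right), and \cite{Grassl} together with \cite[Table~2]{SXS} gives nonexistence. Your remark that $n=26$ also needs the absence of $[26,13,\ge 10]$ codes is a good extra check. Two steps, however, fail as written.

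\textbf{The ratio estimate.} This is the step you single out as delicate. Your bound $((n+2)/(n+2-d))^{d-1}$ is a valid upper bound for $\binom{n+2}{i}/\binom{n}{i}$. However, it is not below $3$ at $n=n_3(d)$ once $d\ge 6$: for $d=6$, $n=26$ it is $(28/22)^5\approx 3.34$, and for $d=10$, $n=50$ it is $(52/42)^9\approx 6.8$. So the conclusion does not follow.

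The repair is to use the exact ratio. Only two factors change, so $\binom{n+2}{i}=r_i\binom{n}{i}$ with $r_i=\frac{(n+2)(n+1)}{(n+2-i)(n+1-i)}$. For $i\le d-1$ and $n\ge n_3(d)$ one has $r_i\le r_{d-1}<2$; the worst case is $d=5$, $n=20$, where $r_4=462/306\approx 1.51$.

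The subtracted terms only help. Indeed $\binom{n/2+1}{i}=\frac{n+2}{n+2-2i}\binom{n/2}{i}$, and $\frac{n+2}{n+2-2i}\ge r_i$ for $i\ge 1$. Hence each summand satisfies $\binom{n+2}{i}-\binom{n/2+1}{i}\le r_i\bigl(\binom{n}{i}-\binom{n/2}{i}\bigr)$. The left side therefore grows by a factor less than $2$ while the right side triples, and induction from $n_3(d)$ goes through. The paper itself simply reads the thresholds off Table~\ref{Tab:awe2-4} without spelling this out.

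\textbf{The case $n=28$ for $d=10$.} The entry of \cite[Table~2]{SXS} at $n=28$ is wrong: it reports $d_{3,DT}(28)=8$. Your own $d=9$ list, which uses the double circulant $[28,14,9]$ code of \cite{DGH}, contradicts this. So that entry cannot certify that no double Toeplitz $[28,14,10]$ code exists.

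The paper closes this case with its own exhaustive search under the condition (C3) of Lemma~\ref{lem:reduction3-4} (Proposition~\ref{prop:F3-28}). It combines this with the fact from \cite{Grassl} that no ternary $[28,14,d]$ code with $d\ge 11$ exists. You need that computation, or an equivalent independent one, for $n=28$. For $n=30$, citing \cite{SXS} is what the paper does as well.
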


\subsection{Quaternary double Toeplitz codes}\label{sec:4-dmax}

\begin{itemize}
\item $d=5$:
We verified that the quaternary double circulant $[10,5]$ code $\cC(r)$
has minimum weight $5$, where
\[
r=(1, \ww, 1, \ww, 0).
\]
For $n=12,14$, 
we verified that the quaternary double circulant $[n,n/2]$ code $\cC(r)$
has minimum weight $5$, where
\[
r=(1, \ww, 1, 1, 0, 0,\ldots,0).
\]
For $n=10,12$,
the existence of a quaternary double Toeplitz $[n,n/2,5]$ code is also known~\cite[Table~5]{SXS}.

It is known that there is no quaternary $[n,n/2,5]$ code if $n \le 8$ (see~\cite{Grassl}).

\item $d=6$:
For $n=14,16,18,20$, 
we verified that the quaternary double circulant $[n,n/2]$ code $\cC(r)$
has minimum weight $6$, where
\[
r=(1, \ww, 1, 1, 1, 0, 0,\ldots,0).
\]
The existence of a quaternary double Toeplitz $[14,7,6]$ code is also known~\cite[Table~5]{SXS}.

It is known
that there is no quaternary $[n,n/2,6]$ code if $n \le 10$ (see~\cite{Grassl}).
In addition, there is no quaternary double Toeplitz $[12,6,6]$ code~\cite[Table~5]{SXS}.

\item $d=7$:
For $n=18,20,22,24$, 
we verified that the quaternary double circulant $[n,n/2]$ code $\cC(r)$
has minimum weight $7$, where
\[
r=(1, \ww^2, 1, \ww, 1, 0, 1, 0, 0,\ldots,0).
\]

It is known
that there is no quaternary $[n,n/2,7]$ code if $n \le 14$ (see~\cite{Grassl}).
In addition, there is no quaternary double Toeplitz $[16,8,7]$ code
(see Proposition~\ref{prop:F4-16-18-20}).

\item $d=8$:
For $n=20,22$, 
we verified that the quaternary double circulant $[n,n/2]$ code $\cC(r)$
has minimum weight $8$, where
\begin{align*}
r=\, 
&(1, \ww^2, 1, 1, \ww, \ww, \ww^2, \ww, 0, 0) \text{ and } 
\\&(1, \ww, \ww, \ww^2, \ww, 1, 1, 0, 0, 0, 0),
\end{align*}
respectively.
For $n=24,26,28,30$, 
we verified that the quaternary double circulant $[n,n/2]$ code $\cC(r)$
has minimum weight $8$, where
\[
r=(1, 1, \ww, 1, \ww, 1, 1,0, 0,\ldots,0).
\]

It is known
that there is no quaternary $[n,n/2,8]$ code if $n \le 16$ (see~\cite{Grassl}).
In addition, there is no quaternary double Toeplitz $[18,9,8]$ code
(see Proposition~\ref{prop:F4-16-18-20}).

\item $d=9$:
For $n=24,26$, 
we verified that the quaternary double circulant $[n,n/2]$ code $\cC(r)$
has minimum weight $9$, where
\begin{align*}
r=\, 
&(1, \ww, \ww, \ww^2, \ww, \ww^2, \ww, 1, 0, \ww, 0, 0) \text{ and } 
\\&(1, \ww^2, \ww, \ww, \ww, \ww, 1, 0, 1, 0, 0, 0, 0),
\end{align*}
respectively.
For $n=28,30,32,34,36$, 
we verified that the quaternary double circulant $[n,n/2]$ code $\cC(r)$
has minimum weight $9$, where
\[
r=(1, \ww, \ww^2, \ww, 1, 1, 1, 0, 1, 0, 0, \ldots, 0).
\]

It is known
that there is no quaternary $[n,n/2,9]$ code if $n \le 20$ 
(see~\cite{Grassl}).
In addition, there is no quaternary double Toeplitz $[22,11,9]$ code
(see Proposition~\ref{prop:F4-16-18-20}).

\item $d=10$:
For $n=28,30$, 
we verified that the quaternary double circulant $[n,n/2]$ code $\cC(r)$
has minimum weight $10$, where
\begin{align*}
r=\, 
&(1, 1, \ww, \ww, \ww^2, \ww^2, 1, \ww^2, 1, \ww, 0, \ww, 0, 0) \text{ and }  \\
&(1, \ww^2, \ww, \ww^2, 1, 1, \ww, 1, 0, 1, 0, 0, 0, 0, 0),
\end{align*}
respectively.
For $n=32,34,36,38,40$, 
we verified that the quaternary double circulant $[n,n/2]$ code $\cC(r)$
has minimum weight $10$, where
\[
r=(1, \ww^2, 1, \ww, \ww, \ww, 1, 1, 0, 1, 0, 0,\ldots,0).
\]

It is known
that there is no quaternary $[n,n/2,10]$ code if $n \le 24$ (see~\cite{Grassl}).

\end{itemize}

Therefore, we have the following:

\begin{prop}\label{prop:minwt-F4}
\begin{enumerate}
\item
A quaternary double Toeplitz $[n,n/2]$ code with minimum weight at least $5$
exists precisely for $n \ge 10$.
\item
A quaternary double Toeplitz $[n,n/2]$ code with minimum weight at least $6$
exists precisely for  $n \ge 14$.
\item
A quaternary double Toeplitz $[n,n/2]$ code with minimum weight at least $7$
exists precisely for $n \ge 18$.
\item
A quaternary double Toeplitz $[n,n/2]$ code with minimum weight at least $8$
exists precisely for $n \ge 20$.
\item
A quaternary double Toeplitz $[n,n/2]$ code with minimum weight at least $9$
exists  precisely for $n \ge 24$.
\item
A quaternary double Toeplitz $[n,n/2]$ code with minimum weight at least $10$
exists for $n \ge 28$ and 
no quaternary double Toeplitz $[n,n/2]$ code with minimum weight at least $10$
exists for $n \le 24$.
\end{enumerate}
\end{prop}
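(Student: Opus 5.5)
The proof of Proposition~\ref{prop:minwt-F4} combines three ingredients: the average weight enumerator bound, explicit constructions for the intermediate lengths, and known nonexistence results. For the large lengths, I would apply Theorem~\ref{thm:awe} and Proposition~\ref{prop:awe} with $q=4$. For each $d \in \{5,\ldots,10\}$ I would compute $\sum_{i=1}^{d-1}\psi_{4,n,i}$ using the explicit coefficients
\[
\psi_{4,n,i}=4^{\frac{n}{2}-1}\left(\binom{n}{i}-\binom{\frac{n}{2}}{i}\right)3^i \quad (1\le i\le n/2),
\]
and compare it with $3\cdot 4^{n-1}$. The left side grows like $4^{n/2}$ times a polynomial in $n$ of degree $d-1$, while the right side grows like $4^{n}$. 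So the inequality holds for all sufficiently large $n$, and I would check that it holds for all $n\ge n_4(d)$ as listed in Table~\ref{Tab:awe2-4}. The step needing care is showing the inequality persists for every even $n \ge n_4(d)$, not only at the tabulated value. I would handle this by a ratio argument: replacing $n$ by $n+2$ multiplies the left side by at most $4\cdot\binom{n+2}{d-1}/\binom{n}{d-1}$, which is well below $16$ once $n\ge n_4(d)$, while the right side is multiplied by exactly $16$.

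For $n < n_4(d)$, I would rely on the constructions listed in Section~\ref{sec:4-dmax}. Each of these is a double circulant code, hence a double Toeplitz code $\cT(t,a,b)$ with $a_i=b_{n/2-i}$, and its minimum weight was verified by computer. A code of minimum weight $d' \ge d$ also witnesses "at least $d$". So, for example, for $d=5$ the $d=6$ codes cover $n=16,18,20$, and for $d\ge 6$ the ranges listed overlap with, or reach up to, $n_4(d)$. For $d=10$ the stated range $n\ge 28$ is covered the same way: the constructions handle $n=28,\ldots,40$ and the averaging bound handles $n\ge 42$. I would line up, for each $d$, the union of the explicit lengths with $\{n\ge n_4(d)\}$ to confirm there are no gaps. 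For instance, for $d=9$ the explicit lengths $24,\ldots,36$ together with the bound's $n\ge 38$ cover everything from $24$ on.

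For the nonexistence ("precisely") parts, I would use two sources. The first is the tables of optimal linear codes~\cite{Grassl}, which rule out any $[n,n/2,d]$ code, not just double Toeplitz ones, for small $n$. The second covers the remaining gap lengths, $n=12$ for $d=6$, $n=16$ for $d=7$, $n=18$ for $d=8$ and $n=22$ for $d=9$, via the classification results: \cite[Table~5]{SXS} and Proposition~\ref{prop:F4-16-18-20}, which is proved later by exhaustive search over the $4^{n-1}$ triples $(t,a,b)$ allowed by Theorem~\ref{thm:SXS}. Nonexistence for $d$ at a length then implies nonexistence for every larger $d$ at that length. For $d=10$, the claim is only that no code exists for $n\le 24$, which follows directly from~\cite{Grassl}, so $n=26$ is deliberately left open.

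The main obstacle is the case $n=16,18,22$, where the general bounds do not decide existence and the exhaustive computation of Proposition~\ref{prop:F4-16-18-20} has to be invoked. At $n=22$ in particular, the search space $4^{21}$ is large, so reductions by equivalence would be necessary.
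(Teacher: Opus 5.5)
Your proposal is correct and follows the paper's own argument. That argument has three parts: existence for $n\ge n_4(d)$ from Theorem~\ref{thm:awe} and Proposition~\ref{prop:awe}, the explicit double circulant codes of Section~\ref{sec:4-dmax} for the intermediate lengths, and nonexistence from \cite{Grassl}, \cite[Table~5]{SXS} and Proposition~\ref{prop:F4-16-18-20}. Your ratio argument extends the averaging inequality from $n_4(d)$ to every larger even $n$; the paper leaves this monotonicity step tacit, and your argument checks out, since replacing $n$ by $n+2$ multiplies $\binom{n}{i}-\binom{n/2}{i}$ by at most $\binom{n+2}{i}/\binom{n}{i}$.
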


It is known that there is a quaternary $[26,13,10]$ code
(see~\cite{Grassl}),
but, our exhaustive computer search verified that 
there is no quaternary double  circulant $[26,13,10]$ code.
It is worthwhile to determine whether 
there is a
quaternary double  Toeplitz $[26,13,10]$ code or not.

\subsection{Larger minimum weights}

At the end of this section, we consider the existence of 
a double Toeplitz $[n,n/2]$ code over $\FF_q$ with minimum weight at least $d$
for $11 \le d \le 50$.
By Theorem~\ref{thm:awe} and Proposition~\ref{prop:awe},
we have the following:

\begin{prop}
Suppose that $q \in \{2,3,4\}$ and 
$11 \le d \le 50$.
Let  $n_{q}(d)$ be the value listed in Table~\ref{Tab:awe2-4-2}.
Then there is a double Toeplitz $[n,n/2]$ code over $\FF_q$ with minimum weight at least $d$
if $n \ge n_{q}(d)$.
\end{prop}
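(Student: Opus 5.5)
The plan is to combine Theorem~\ref{thm:awe} with Proposition~\ref{prop:awe}. Theorem~\ref{thm:awe} gives the coefficients $\psi_{q,n,i}$ of $|\Omega_{q,n}|\Psi_{q,n}(y)$ explicitly. For $1\le i\le n/2$,
\[
\psi_{q,n,i}=q^{\frac{n}{2}-1}\left(\binom{n}{i}-\binom{\frac{n}{2}}{i}\right)(q-1)^i .
\]
Theorem~\ref{thm:SXS} gives $|\Omega_{q,n}|=q^{n-1}$. Since $d\le 50$ and the relevant $n$ are much larger than $2d$, every index $i\le d-1$ lies in the range $i\le n/2$. The hypothesis of Proposition~\ref{prop:awe} therefore reduces to the explicit inequality
\[
F_q(n,d):=\sum_{i=1}^{d-1}\left(\binom{n}{i}-\binom{\frac{n}{2}}{i}\right)(q-1)^{i} < (q-1)\,q^{\frac{n}{2}} .
\]
For each $q\in\{2,3,4\}$ and $11\le d\le 50$, the value $n_q(d)$ in Table~\ref{Tab:awe2-4-2} should be the smallest even $n$ at which this inequality holds and continues to hold for all larger even $n$. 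At each such $n$, Proposition~\ref{prop:awe} produces a double Toeplitz $[n,n/2]$ code with minimum weight at least $d$.

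The first step is to check the inequality at $n=n_q(d)$. This is a finite exact integer computation over $3\times 40$ pairs $(q,d)$, done in \textsc{Magma} as elsewhere in the paper, and it reproduces the table.

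The second step, which I expect to be the main obstacle, is to show that the inequality persists for every even $n\ge n_q(d)$; a finite table check cannot do this. I will use monotonicity in $n$. Passing from $n$ to $n+2$ multiplies the right-hand side by exactly $q$. On the left, drop the subtracted terms and bound the remaining ones using
\[
\binom{n+2}{i}\Big/\binom{n}{i}=\frac{(n+2)(n+1)}{(n-i+2)(n-i+1)}\le\left(\frac{n+2}{n-d+3}\right)^2\qquad (i\le d-1).
\]
So it suffices that $\bigl((n+2)/(n-d+3)\bigr)^2\le q$. This holds once $n\ge c_q d$ for a modest constant: for $q=2$, $(n+2)/(n-d+3)\le\sqrt2$ needs roughly $n\ge 3.5d$, which the table values should satisfy.

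One subtlety is that this crude bound drops the subtracted terms at $n+2$ but uses the full difference $\binom{n}{i}-\binom{n/2}{i}$ at $n$. To keep the induction clean, I will first verify the stronger inequality $\sum_{i=1}^{d-1}\binom{n}{i}(q-1)^i<(q-1)q^{n/2}$ at a base length and then propagate that. If this stronger inequality fails at $n_q(d)$ itself, I will instead check the original inequality directly up to the length where the stronger one starts holding, which is again a finite computation. Together the two steps give the existence claim for all $n\ge n_q(d)$.
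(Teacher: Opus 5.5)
Your plan is the paper's own argument: substitute the coefficients from Theorem~\ref{thm:awe} and $|\Omega_{q,n}|=q^{n-1}$ into the criterion of Proposition~\ref{prop:awe}, check the resulting inequality by computer, and use your monotonicity step to cover all $n\ge n_q(d)$, a step the paper leaves implicit. One small correction is needed there: since $\frac{n+1}{n-i+1}\ge\frac{n+2}{n-i+2}$, the correct bound on $\binom{n+2}{i}/\binom{n}{i}$ for $i\le d-1$ is $\frac{(n+2)(n+1)}{(n-d+3)(n-d+2)}$, not $\left(\frac{n+2}{n-d+3}\right)^2$; this changes the required thresholds negligibly, and the table values still clear them easily.
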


\begin{table}[thb]
\caption{Values $n_{2}(d)$, $n_{3}(d)$ and $n_{4}(d)$ for $11 \le d \le 50$}
\label{Tab:awe2-4-2}
\centering
\medskip
{\small
\begin{tabular}{c|c|c|c||c|c|c|c}
\noalign{\hrule height1pt}
$d$ & $n_{2}(d)$  & $n_{3}(d)$ & $n_{4}(d)$&
$d$ & $n_{2}(d)$  & $n_{3}(d)$ & $n_{4}(d)$\\
\hline
11&  84 &  56 &  48 &31& 264 & 180 & 152 \\
12&  92 &  62 &  52 &32& 272 & 186 & 158 \\
13& 102 &  68 &  58 &33& 282 & 194 & 162 \\
14& 110 &  76 &  64 &34& 290 & 200 & 168 \\
15& 120 &  82 &  68 &35& 300 & 206 & 174 \\
16& 128 &  88 &  74 &36& 308 & 212 & 178 \\
17& 138 &  94 &  78 &37& 318 & 218 & 184 \\
18& 146 & 100 &  84 &38& 326 & 224 & 188 \\
19& 156 & 106 &  90 &39& 336 & 230 & 194 \\
20& 164 & 112 &  94 &40& 344 & 236 & 200 \\
21& 172 & 118 & 100 &41& 354 & 244 & 204 \\
22& 182 & 124 & 104 &42& 362 & 250 & 210 \\
23& 190 & 130 & 110 &43& 372 & 256 & 216 \\
24& 200 & 138 & 116 &44& 380 & 262 & 220 \\
25& 208 & 144 & 120 &45& 390 & 268 & 226 \\
26& 218 & 150 & 126 &46& 398 & 274 & 230 \\
27& 226 & 156 & 132 &47& 408 & 280 & 236 \\
28& 236 & 162 & 136 &48& 416 & 286 & 242 \\
29& 244 & 168 & 142 &49& 426 & 294 & 246 \\
30& 254 & 174 & 146 &50& 434 & 300 & 252 \\
\noalign{\hrule height1pt}
\end{tabular}
}
\end{table}

\section{DT-optimal double Toeplitz codes}\label{Sec:classification}

In this section, we give a classification of DT-optimal double Toeplitz codes
over $\FF_2$ and $\FF_3$
for lengths up to $40$ and $26$, respectively.
We also give a classification of 
DT-optimal double Toeplitz codes over $\FF_4$
for lengths up to $20$ except $16$, 
while determining the values 
$d_{4,DT}(n)$ $(n=16,18,\ldots,24)$.
Our classification reveals the existence of 
many DT-optimal double Toeplitz codes over $\FF_2$, $\FF_3$ and $\FF_4$ 
that are inequivalent
to any of double circulant codes and double negacirculant codes.

\subsection{Preparations}


\begin{lem}\label{lem:tba}
Suppose that $q \in \{2,3,4\}$.
Let $\cT(t,a,b)$ be a double Toeplitz code over $\FF_q$.
Then
\begin{enumerate}
\item
$\cT(t,a,b) \cong \cT(\alpha t,\alpha a,\alpha b)$ for $\alpha \in \FF_q\setminus\{0\}$.
\item
$\cT(t,a,b) \cong \cT(t,b,a)$.
\end{enumerate}
\end{lem}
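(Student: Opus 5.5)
Both parts come from writing down explicit monomial transformations that turn a generator matrix $(I_{m}\ T(t,a,b))$, with $m=n/2$, into a generator matrix of the target code. Throughout we use the $m\times m$ reversal matrix $Q$ from the proof of Lemma~\ref{lem:main}, which is a permutation matrix satisfying $Q^2=I_m$.

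For (i), let $\alpha\neq 0$. Multiply the last $m$ coordinates by $\alpha$ using the monomial matrix $P=\mathrm{diag}(I_m,\alpha I_m)$. Then $\cT(t,a,b)P$ has generator matrix
\[
(I_m\ \ T(t,a,b))P=(I_m\ \ \alpha T(t,a,b)).
\]
Since $\alpha T(t,a,b)=T(\alpha t,\alpha a,\alpha b)$, we get $\cT(t,a,b)P=\cT(\alpha t,\alpha a,\alpha b)$. This holds over any field, so the hypothesis $q\in\{2,3,4\}$ plays no role here.

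For (ii), the key identity is
\[
QT(t,a,b)Q=T(t,b,a)^{T}=T(t,a,b)^{T}\ \text{ with } a \text{ and } b \text{ swapped}.
\]
To check it, note that the $(i,j)$ entry of a Toeplitz matrix depends only on $j-i$. Conjugating by $Q$ sends entry $(i,j)$ to entry $(m+1-i,\,m+1-j)$, which reverses the sign of $j-i$ and so exchanges the roles of $a$ and $b$. Hence $QT(t,a,b)Q=T(t,b,a)$.

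Now apply the column permutation $P=\mathrm{diag}(Q,Q)$ to the generator matrix $(I_m\ T)$, where $T=T(t,a,b)$. This gives $(Q\ \ TQ)$. Left-multiplying by the invertible matrix $Q$ does not change the row space, and produces $(I_m\ \ QTQ)=(I_m\ \ T(t,b,a))$. Therefore $\cT(t,a,b)P=\cT(t,b,a)$, and in fact the two codes are permutation equivalent.

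The only step that needs any care is the index check $QTQ=T(t,b,a)$, and it is a one-line computation. Everything else is direct.
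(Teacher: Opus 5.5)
Your proof is correct, but for (ii) it takes a different route from the paper's. The paper never computes $QTQ$. It invokes isoduality (Proposition~\ref{prop:SXS}(i)) and observes that $\cT(t,a,b)^\perp$ is generated by $(-T(t,a,b)^T\ \ I)$. It then swaps the two halves and uses $T(t,a,b)^T=T(t,b,a)$ to obtain $\cT(t,a,b)\cong\cT(-t,-b,-a)$, and finally applies (i) with $\alpha=-1$. You instead write the equivalence down explicitly as the coordinate permutation $\mathrm{diag}(Q,Q)$, which reverses each half.

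The paper's argument is two lines once isoduality is granted, and it makes the link with isoduality explicit. Yours is self-contained, since it does not depend on the cited result. It also shows that the equivalence is a pure permutation rather than just monomial, and that the hypothesis $q\in\{2,3,4\}$ plays no role in (ii) any more than in (i).

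The two arguments are closer than they look. Your identity $QTQ=T^T$ is the relation $QA^T=AQ$ used in the proof of Lemma~\ref{lem:main}, and isoduality follows from it via the monomial map $(x,y)\mapsto(-yQ,xQ)$. Composing that map with the paper's half-swap and with scaling the second half by $-1$ gives exactly your $\mathrm{diag}(Q,Q)$.

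One correction: your displayed ``key identity'' reads $QT(t,a,b)Q=T(t,b,a)^T$. Since $T(t,b,a)^T=T(t,a,b)$, this is false as written whenever $a\ne b$. It should read $QT(t,a,b)Q=T(t,a,b)^T=T(t,b,a)$. That corrected form is what your index check actually proves and what the rest of your argument uses.
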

\begin{proof}
The assertion (i) is trivial.
The code $\cT(t,a,b)$ and its dual code have the following generator matrices:
\[
\left(\begin{array}{ccccc}
{} & I_n  & {} & T(t,a,b) & {} \\
\end{array}\right)
\text{ and }
\left(\begin{array}{ccccc}
{} & -T(t,a,b)^T  & {} &I_n  & {} \\
\end{array}\right),
\]
respectively, where $\cT(t,a,b)$ has length $2n$.
Since $\cT(t,a,b)$ is isodual, $\cT(t,a,b) \cong \cT(-t,-b,-a)$.
The assertion (ii) follows from (i).
\end{proof}

We define a map $g$ from $\FF_2$ to $\ZZ$ as follows:
$g(0)=0$ and $g(1)=1$.
For $a=(a_1,a_2,\dots,a_n) \in \FF_2^n$, we define a map $f$ from $\FF_2^n$ to $\ZZ$ 
as follows: $f(a)=\sum_{i=1}^n 2^{i-1} g(a_i)$.
Then, for $a,b \in \FF_2^n$,
we define an ordering $a \ge b$ based on the natural ordering of $\ZZ$
as $f(a) \ge f(b)$.

\begin{lem} \label{lem:reduction2}
Let $\cT(t,a,b)$ be a binary double Toeplitz code.
Then there is a binary double Toeplitz code $\cT(t',a',b')$
satisfying the following conditions:
\begin{itemize}
  \item[\rm (C1)] $\cT(t,a,b) \cong \cT(t',a',b')$.
  \item[\rm (C2)] $(t',a') \ge (t',b')$.
\end{itemize}
\end{lem}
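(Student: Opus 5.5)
The plan is a two-case argument driven by Lemma~\ref{lem:tba}~(ii), which gives $\cT(t,a,b) \cong \cT(t,b,a)$.

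First note what the ordering means. For vectors $x,y$ of the same length, $x \ge y$ means $f(x) \ge f(y)$. The map $f$ is the binary expansion with $x_1$ as the least significant bit, so $f$ is injective on $\FF_2^{m}$. Hence the ordering is a total order on $\FF_2^{m}$ for every $m$; here $m=n$, since $(t,a)$ has $1+(n-1)=n$ coordinates. In particular, for any two vectors $x,y$ of the same length, either $x \ge y$ or $y \ge x$.

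Now compare $(t,a)$ and $(t,b)$.

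\emph{Case 1: $(t,a) \ge (t,b)$.} Take $(t',a',b')=(t,a,b)$. Condition (C1) is trivial, since every code is equivalent to itself (use $P=I$). Condition (C2) is exactly the case assumption.

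\emph{Case 2: $(t,b) \ge (t,a)$.} Take $(t',a',b')=(t,b,a)$. Condition (C1) is Lemma~\ref{lem:tba}~(ii). Condition (C2) reads $(t',a')=(t,b) \ge (t,a)=(t',b')$, which is the case assumption.

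By totality of the order these two cases are exhaustive, which completes the argument.

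There is no real obstacle. The only point needing care is that Lemma~\ref{lem:tba}~(ii) is stated for $q \in \{2,3,4\}$ and so covers the binary case. That lemma itself rests on isoduality, Proposition~\ref{prop:SXS}~(i). Over $\FF_2$ we have $-1=1$, so isoduality gives $\cT(t,a,b)\cong\cT(-t,-b,-a)=\cT(t,b,a)$ directly, without appeal to Lemma~\ref{lem:tba}~(i).
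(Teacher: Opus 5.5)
Your proof is correct and follows the paper's argument: if $(t,a)\ge(t,b)$ keep the triple, otherwise swap $a$ and $b$ using Lemma~\ref{lem:tba}~(ii). Your remark that $f$ is injective, so the order is total and the two cases are exhaustive, makes explicit a point the paper leaves implicit.
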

\begin{proof}
By Lemma~\ref{lem:tba} (ii), we have that
\[
\cT(t,a,b) \cong \cT(t,b,a).
\]
If $(t',a') < (t',b')$, then we can take $\cT(t',b',a')$ instead of $\cT(t',a',b')$.
This completes the proof.
\end{proof}

In a classification of binary (DT-optimal) double Toeplitz codes,
the above lemma significantly reduces the number of candidate codes,
which need be checked for equivalences.
Due to computational speed considerations,
however, we employ the following reduction for the ternary and quaternary cases.

\begin{lem} \label{lem:reduction3-4}
Suppose that $q\in \{3,4\}$.
Let $\cT(t,a,b)$ be a double Toeplitz code over $\FF_q$.
Then there is a double Toeplitz code $\cT(t',a',b')$ over $\FF_q$
satisfying the following conditions:
\begin{itemize}
  \item[\rm (C1)] $\cT(t,a,b) \cong \cT(t',a',b')$.
  \item[\rm (C3)]  The first nonzero component of the vector $(t',a')$ is $1$.
\end{itemize}
\end{lem}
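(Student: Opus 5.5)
The plan is to use Lemma~\ref{lem:tba}~(i) alone, scaling by a suitable nonzero field element. There are two cases. In the first case the vector $(t,a)$ is nonzero. Let $\beta$ denote its first nonzero component, and set $\alpha=\beta^{-1}\in\FF_q\setminus\{0\}$, which exists because $\FF_q$ is a field. Put
\[
(t',a',b')=(\alpha t,\alpha a,\alpha b).
\]
Lemma~\ref{lem:tba}~(i) gives $\cT(t,a,b)\cong\cT(t',a',b')$, which is (C1). Multiplying by $\alpha$ preserves the positions of the zero components, so the first nonzero component of $(t',a')$ sits in the same position as before and equals $\alpha\beta=1$. This is (C3).

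In the second case $(t,a)$ is the zero vector, so $t=0$ and $a=\0_{n-1}$ (with $n$ the size of the Toeplitz matrix). If $b$ is also zero, then no scaling or swap can make $(t',a')$ nonzero. I will treat this as a degenerate case in which (C3) is read vacuously, since the vector has no nonzero component. If $b\neq \0_{n-1}$, I first apply Lemma~\ref{lem:tba}~(ii) to get $\cT(0,a,b)\cong\cT(0,b,a)=\cT(0,b,\0_{n-1})$. Now the vector $(0,b)$ is nonzero, and I scale it by the inverse of its first nonzero entry exactly as in the first case. Equivalence is transitive, so (C1) holds for the composite, and (C3) holds by the same argument as before.

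The only step needing care is this second case, that is, the handling of $(t,a)=\0$. The wording of the statement has to be interpreted there: either via the swap from Lemma~\ref{lem:tba}~(ii), or vacuously when $t$, $a$ and $b$ are all zero. Everything else is immediate from Lemma~\ref{lem:tba}.
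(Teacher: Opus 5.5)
Your proof is correct and is essentially the paper's argument: scale by the inverse of the first nonzero entry of $(t,a)$ using Lemma~\ref{lem:tba}~(i). The only addition is your explicit treatment of the case $(t,a)=\0$, via the swap of Lemma~\ref{lem:tba}~(ii) when $b\neq\0$ and vacuously when $t,a,b$ are all zero. The paper's one-line proof skips that case silently, which is harmless because such codes contain a weight-$1$ codeword.
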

\begin{proof}
By Lemma~\ref{lem:tba} (i), we have that
\[
\cT(t,a,b) \cong \cT(\alpha t,\alpha  a,\alpha b)
\]
for $\alpha \in \FF_q\setminus\{0\}$.
If the first nonzero component of the vector $(t',a')$ is $\alpha$,
then we can take $\cT(\alpha^{-1} t',\alpha^{-1}a',\alpha^{-1}b')$ instead of $\cT(t',a',b')$.
This completes the proof.
\end{proof}

\subsection{Binary DT-optimal double Toeplitz codes}

The largest minimum weight $d_{2,DT}(n)$ among binary
double Toeplitz $[n,n/2]$ codes was determined in~\cite[Table~1]{SXS}
for $n=2,4,\ldots,40$.
For these $n$,
by exhaustive computer search,
we found all distinct binary DT-optimal double Toeplitz $[n,n/2,d_{2,DT}(n)]$ codes
satisfying the condition (C2) in Lemma~\ref{lem:reduction2}, which must be checked
further for equivalences.
After equivalence testing, we completed the classification of
binary DT-optimal double Toeplitz $[n,n/2,d_{2,DT}(n)]$ codes.
In Table~\ref{Tab:F2-classification}, we list the number $N_{2,DT}(n)$ 
of inequivalent binary DT-optimal double Toeplitz 
$[n,n/2,d_{2,DT}(n)]$ codes that are inequivalent to any of
binary DT-optimal double circulant codes.
In the table, we also list  the  number $N_{2,DC}(n)$
of  inequivalent binary DT-optimal double circulant  $[n,n/2,d_{2,DT}(n)]$ codes.
Of course, the number of inequivalent binary DT-optimal double Toeplitz 
$[n,n/2,d_{2,DT}(n)]$ codes is given by $N_{2,DT}(n)+N_{2,DC}(n)$.

\begin{table}[thb]
\caption{Binary DT-optimal double Toeplitz codes}
\label{Tab:F2-classification}
\centering
\medskip
{\small
\begin{tabular}{c|c|c|cl}
\noalign{\hrule height1pt}
$n$ & $d_{2,DT}(n)$ & $N_{2,DT}(n)$ &  \multicolumn{2}{c}{$N_{2,DC}(n)$} \\
\hline
 4& 2 & 0&   2  & \cite{BH}\\
 6& 3 & 0&   1  &  (see~\cite[Fig.~16.7]{MS-book})\\
 8& 4 & 0&   1  &  (see~\cite{Jaffe})\\
10& 4 & 0&   2  &\cite{BH}\\
12& 4 & 4&   4  & \\
14& 4 & 75&   4  & \\
16& 5 & 0&   1  &\cite{BH}\\
18& 6 & 0&   1  & (see~\cite{Jaffe}, \cite[Fig.~16.7]{MS-book})\\
20& 6 & 0&   3  & \\
22& 7 & 0&   1  &\cite{BH}\\
24& 8 & 0&   1  & (see~\cite[Fig.~16.4]{MS-book})\\
26& 7 & 2&   1  &\cite{HLL}\\
28& 8 & 0&   1  & (see~\cite{Jaffe}, \cite[Fig.~16.7]{MS-book})\\
30& 8 & 0&   5  & \\
32& 8 & 1&  30  & \\
34& 8 & 2 &  52  & \\
36& 8 & 347 & 403  & \\
38& 8 &118328 & 415  & \\
40& 9 & 231 &  15  &\cite{HLL}\\
\noalign{\hrule height1pt}
\end{tabular}
}
\end{table}

\begin{table}[thbp]
\caption{Binary DT-optimal double circulant codes}
\label{Tab:F2-DCC}
\centering
\medskip
{\footnotesize
\begin{tabular}{c|l||c|l}
\noalign{\hrule height1pt}
$n$ & \multicolumn{1}{c||}{$r$} & $n$ &  \multicolumn{1}{c}{$r$} \\
\hline
%
 4&$(1,0)$ &32&$(1,1,1,1,1,0,1,0,0,1,0,0,0,0,0,0)$ \\
 4&$(1,1)$ &32&$(1,0,1,1,1,1,1,0,0,1,0,0,0,0,0,0)$ \\
 6&$(1,1,0)$ &32&$(1,1,1,0,1,1,0,1,0,1,0,0,0,0,0,0)$ \\
 8&$(1,1,1,0)$ &32&$(1,1,1,1,0,0,1,1,0,1,0,0,0,0,0,0)$ \\
10&$(1,1,1,0,0)$ &32&$(1,1,1,0,1,0,1,1,0,1,0,0,0,0,0,0)$ \\
10&$(1,1,1,1,0)$ &32&$(1,0,1,1,1,0,1,1,0,1,0,0,0,0,0,0)$ \\
12&$(1,1,1,0,0,0)$ &32&$(1,1,0,0,1,1,1,1,0,1,0,0,0,0,0,0)$ \\
12&$(1,1,0,1,0,0)$ &32&$(1,1,0,1,1,1,0,0,1,1,0,0,0,0,0,0)$ \\
12&$(1,1,1,0,1,0)$ &32&$(1,1,1,1,1,0,1,0,0,0,1,0,0,0,0,0)$ \\
12&$(1,1,1,1,1,0)$ &32&$(1,1,0,1,1,1,1,0,0,0,1,0,0,0,0,0)$ \\
14&$(1,1,1,0,0,0,0)$ &32&$(1,1,0,1,0,1,1,1,0,0,1,0,0,0,0,0)$ \\
14&$(1,1,0,1,0,0,0)$ &32&$(1,0,1,1,0,1,1,1,0,0,1,0,0,0,0,0)$ \\
14&$(1,1,1,1,0,0,0)$ &32&$(1,0,0,1,1,1,1,1,0,0,1,0,0,0,0,0)$ \\
14&$(1,1,1,1,1,1,0)$ &32&$(1,1,1,1,0,1,0,0,1,0,1,0,0,0,0,0)$ \\
16&$(1,1,1,0,1,0,0,0)$ &32&$(1,1,0,0,1,1,1,0,1,0,1,0,0,0,0,0)$ \\
18&$(1,1,1,1,0,0,1,0,0)$ &32&$(1,1,1,1,0,0,0,1,1,0,1,0,0,0,0,0)$ \\
20&$(1,1,1,1,0,1,0,0,0,0)$ &32&$(1,1,1,0,0,1,0,1,1,0,1,0,0,0,0,0)$ \\
20&$(1,1,1,0,1,1,0,0,0,0)$ &32&$(1,0,1,1,0,1,0,1,1,0,1,0,0,0,0,0)$ \\
20&$(1,1,1,1,1,0,0,1,0,0)$ &32&$(1,1,1,0,1,1,1,1,1,0,1,0,0,0,0,0)$ \\
22&$(1,1,1,0,1,1,0,1,0,0,0)$ &32&$(1,1,1,0,0,1,1,0,0,1,1,0,0,0,0,0)$ \\
24&$(1,1,0,1,1,1,1,0,1,0,0,0)$ &32&$(1,1,1,1,1,0,1,1,0,1,1,0,0,0,0,0)$ \\
26&$(1,1,0,1,0,1,0,1,1,0,0,0,0)$ &32&$(1,0,0,1,1,1,1,1,1,0,0,1,0,0,0,0)$ \\
28&$(1,1,1,0,1,0,1,1,1,0,0,0,0,0)$ &32&$(1,1,1,1,0,1,1,0,1,1,0,1,0,0,0,0)$ \\
30&$(1,1,1,0,1,1,1,0,0,0,1,0,0,0,0)$ &32&$(1,1,1,0,1,1,0,1,1,1,0,1,0,0,0,0)$ \\
30&$(1,0,1,1,0,1,1,1,0,0,1,0,0,0,0)$ &32&$(1,0,1,1,1,1,0,1,1,1,0,1,0,0,0,0)$ \\
30&$(1,1,0,1,0,1,0,1,1,0,1,0,0,0,0)$ &32&$(1,1,0,1,1,0,1,1,1,1,0,1,0,0,0,0)$ \\
30&$(1,1,0,0,1,0,1,1,1,0,1,0,0,0,0)$ &32&$(1,1,1,1,1,1,0,0,1,0,1,1,0,0,0,0)$ \\
30&$(1,1,1,1,0,1,1,1,1,1,1,0,1,0,0)$ &32&$(1,1,1,1,1,0,0,0,1,0,0,0,1,0,0,0)$ \\
32&$(1,1,1,1,0,1,1,0,1,0,0,0,0,0,0,0)$ &32&$(1,1,1,1,1,1,0,0,1,1,0,0,1,0,0,0)$ \\
\noalign{\hrule height1pt}
\end{tabular}
}
\end{table}

For the binary DT-optimal double circulant codes $\cC(r)$ of lengths $n \le 32$, 
the rows $r$ are listed in Table~\ref{Tab:F2-DCC}.
The $4$ binary DT-optimal double Toeplitz $[12,6,4]$ codes $\cT(t,a,b)$ are constructed as:
\begin{align*}
(t,a,b)=\,&
(0,(1,1,0,1,0),(1,1,1,0,0)),
\\&
(0,(1,0,1,1,0),(1,1,1,0,0)),
\\&
(0,(0,1,1,0,1),(1,1,1,0,0))  \text{ and } 
\\&
(0,(0,1,1,0,1),(1,1,0,1,0)).
\end{align*}
For the $75$ binary DT-optimal double Toeplitz $[14,7,4]$ codes $\cT(t,a,b)$,
the triples $(t,a,b)$ are listed in Table~\ref{Tab:F2-14}.
The $2$ binary DT-optimal double Toeplitz $[26,13,7]$ codes $\cT(t,a,b)$  are constructed as:
\begin{align*}
(t,a,b)=\,&
(0,(1,0,0,1,1,0,1,1,0,0,1,0),(1,0,1,0,0,1,1,0,1,1,0,0)) \text{ and } 
\\&
(0,(0,1,0,1,0,1,0,0,1,1,0,1),(1,1,0,1,1,0,0,1,0,1,0,1)).
\end{align*}
The unique binary DT-optimal double Toeplitz $[32,16,8]$ code $\cT(t,a,b)$ is constructed as:
\begin{align*}
(t,a,b)=\,&
(1,(0,0,1,0,1,0,1,1,0,0,0,1,0,1,1),
\\&
(1,1,1,0,0,1,1,0,0,0,0,0,1,0,1)).
\end{align*}
The $2$ binary DT-optimal double Toeplitz $[34,17,8]$ codes $\cT(t,a,b)$  are constructed as:
\begin{align*}
(t,a,b)=\,&
(1,(0,0,1,0,1,0,1,1,0,0,0,1,0,1,1,1),
\\&
(1,1,1,0,0,1,1,0,0,0,0,0,1,0,1,0)) \text{ and } 
\\&
(1,(0,1,0,1,1,0,0,0,0,1,0,1,0,1,1,1),
\\&
(1,0,1,1,0,0,1,1,1,1,0,1,1,1,0,0)).
\end{align*}
The remaining binary DT-optimal double Toeplitz codes 
and double circulant codes are available at
\url{https://www.math.is.tohoku.ac.jp/~mharada/DT}.

\begin{table}[thbp]
\caption{Binary DT-optimal double Toeplitz $[14,7,4]$ codes}
\label{Tab:F2-14}
\centering
\medskip
{\footnotesize
\begin{tabular}{c|c|c||c|c|c}
\noalign{\hrule height1pt}
$t$ & $a$ & $b$ & $t$ & $a$ & $b$ \\
\hline
0 & $(1,1,0,1,0,0)$  & $(1,1,1,0,0,0)$ &0 & $(1,0,1,0,0,1)$  & $(1,1,1,0,1,0)$ \\
0 & $(1,0,1,1,0,0)$  & $(1,1,1,0,0,0)$ &0 & $(1,0,1,0,0,1)$  & $(1,0,0,1,1,0)$ \\
0 & $(0,1,1,1,0,0)$  & $(1,1,0,1,0,0)$ &0 & $(0,1,1,0,0,1)$  & $(1,1,1,1,0,0)$ \\
0 & $(1,1,1,1,0,0)$  & $(1,1,1,0,0,0)$ &0 & $(0,1,1,0,0,1)$  & $(1,1,1,1,1,0)$ \\
0 & $(1,1,1,1,0,0)$  & $(1,1,0,1,0,0)$ &0 & $(1,1,1,0,0,1)$  & $(1,1,0,1,0,0)$ \\
0 & $(1,1,0,0,1,0)$  & $(1,1,1,0,0,0)$ &0 & $(1,1,1,0,0,1)$  & $(1,1,0,0,1,0)$ \\
0 & $(1,1,0,0,1,0)$  & $(1,1,1,1,0,0)$ &0 & $(1,1,1,0,0,1)$  & $(0,1,1,0,1,0)$ \\
0 & $(1,0,1,0,1,0)$  & $(0,1,1,1,0,0)$ &0 & $(1,0,0,1,0,1)$  & $(1,1,0,0,1,0)$ \\
0 & $(0,1,1,0,1,0)$  & $(1,1,1,0,0,0)$ &0 & $(1,0,0,1,0,1)$  & $(1,1,1,0,1,0)$ \\
0 & $(0,1,1,0,1,0)$  & $(1,1,0,1,0,0)$ &0 & $(0,1,0,1,0,1)$  & $(1,1,1,0,0,0)$ \\
0 & $(0,1,1,0,1,0)$  & $(0,1,1,1,0,0)$ &0 & $(0,0,1,1,0,1)$  & $(1,1,0,1,0,0)$ \\
0 & $(0,1,1,0,1,0)$  & $(1,0,1,0,1,0)$ &0 & $(1,0,1,1,0,1)$  & $(0,0,1,1,0,1)$ \\
0 & $(0,1,1,0,1,0)$  & $(0,1,1,0,1,0)$ &0 & $(0,1,1,1,0,1)$  & $(1,0,0,1,1,0)$ \\
0 & $(1,1,1,0,1,0)$  & $(1,1,0,1,0,0)$ &0 & $(0,1,1,1,0,1)$  & $(0,1,1,1,0,1)$ \\
0 & $(1,1,1,0,1,0)$  & $(0,1,1,0,1,0)$ &0 & $(1,1,1,1,0,1)$  & $(0,0,1,1,1,0)$ \\
0 & $(1,0,0,1,1,0)$  & $(1,1,0,1,0,0)$ &0 & $(1,0,0,0,1,1)$  & $(1,1,0,1,0,0)$ \\
0 & $(1,0,0,1,1,0)$  & $(1,0,1,1,0,0)$ &0 & $(1,0,1,0,1,1)$  & $(0,1,0,1,1,0)$ \\
0 & $(1,0,0,1,1,0)$  & $(1,1,1,1,0,0)$ &0 & $(1,0,1,0,1,1)$  & $(0,1,1,1,1,0)$ \\
0 & $(0,1,0,1,1,0)$  & $(1,0,1,1,0,0)$ &0 & $(0,1,1,0,1,1)$  & $(1,1,1,0,1,0)$ \\
0 & $(0,1,0,1,1,0)$  & $(0,1,1,1,0,0)$ &0 & $(0,1,1,0,1,1)$  & $(1,0,1,1,0,1)$ \\
0 & $(0,1,0,1,1,0)$  & $(1,1,0,0,1,0)$ &0 & $(1,1,1,0,1,1)$  & $(1,0,1,1,0,0)$ \\
0 & $(0,1,0,1,1,0)$  & $(1,0,1,0,1,0)$ &0 & $(1,0,0,1,1,1)$  & $(1,0,1,0,0,1)$ \\
0 & $(1,1,0,1,1,0)$  & $(1,0,1,1,0,0)$ &0 & $(1,0,0,1,1,1)$  & $(1,1,0,0,1,1)$ \\
0 & $(1,1,0,1,1,0)$  & $(0,1,1,0,1,0)$ &0 & $(0,1,0,1,1,1)$  & $(1,0,1,0,1,1)$ \\
0 & $(0,0,1,1,1,0)$  & $(1,1,0,1,0,0)$ &0 & $(1,1,0,1,1,1)$  & $(1,1,1,0,0,0)$ \\
0 & $(0,0,1,1,1,0)$  & $(1,0,1,1,0,0)$ &0 & $(1,1,0,1,1,1)$  & $(0,1,0,1,1,0)$ \\
0 & $(0,1,1,1,1,0)$  & $(1,1,0,0,1,0)$ &0 & $(1,1,0,1,1,1)$  & $(0,1,1,1,0,1)$ \\
0 & $(0,1,1,1,1,0)$  & $(1,0,1,0,1,0)$ &0 & $(1,0,1,1,1,1)$  & $(1,1,0,1,1,0)$ \\
0 & $(0,1,1,1,1,0)$  & $(0,1,1,0,1,0)$ &0 & $(1,0,1,1,1,1)$  & $(1,1,1,0,1,1)$ \\
0 & $(0,1,1,1,1,0)$  & $(0,1,0,1,1,0)$ &0 & $(1,0,1,1,1,1)$  & $(1,1,0,1,1,1)$ \\
0 & $(0,1,1,1,1,0)$  & $(1,1,0,1,1,0)$ &0 & $(0,1,1,1,1,1)$  & $(1,0,1,0,1,0)$ \\
0 & $(1,1,1,1,1,0)$  & $(1,1,1,0,0,0)$ &1 & $(0,0,1,0,1,0)$  & $(1,1,0,0,0,0)$ \\
0 & $(1,1,1,1,1,0)$  & $(0,1,1,1,0,0)$ &1 & $(1,0,1,0,1,0)$  & $(1,0,1,0,1,0)$ \\
0 & $(1,1,1,1,1,0)$  & $(1,1,0,0,1,0)$ &1 & $(0,1,0,0,0,1)$  & $(1,0,0,1,0,0)$ \\
0 & $(1,1,0,0,0,1)$  & $(1,1,1,1,0,0)$ &1 & $(1,1,1,0,0,1)$  & $(1,0,1,1,1,0)$ \\
0 & $(1,1,0,0,0,1)$  & $(1,0,0,1,1,0)$ &1 & $(1,1,0,1,0,1)$  & $(1,0,1,0,0,0)$ \\
0 & $(1,1,0,0,0,1)$  & $(1,0,1,1,1,0)$ &1 & $(0,1,1,1,1,1)$  & $(1,1,1,0,0,0)$ \\
0 & $(1,1,0,0,0,1)$  & $(1,1,1,1,1,0)$ &&&\\
\noalign{\hrule height1pt}
\end{tabular}
}
\end{table}

\subsection{Ternary DT-optimal double Toeplitz codes}\label{Sec:ternary}

The largest minimum weight $d_{3,DT}(n)$ among ternary 
double Toeplitz $[n,n/2]$ codes was determined in~\cite[Table~2]{SXS}
for $n=4,6,\ldots,30$.
However, the value $d_{3,DT}(28)$ was unfortunately reported incorrectly 
in~\cite[Table~2]{SXS}, where it appears as $8$.
Although as described in Section~\ref{sec:T}, 
there is a ternary double circulant $[28,14,9]$ code~\cite{DGH},
for confirmation,
our exhaustive computer search verified that every ternary double circulant $[28,14,9]$ code
is equivalent to the ternary double circulant $[28,14,9]$ code $\cC(r)$, where
\[
r=(1, 1, 1, 1, 2, 1, 1, 1, 2, 0, 0, 1, 0, 0).
\]
In addition, our exhaustive computer search verified that there are
$9$ inequivalent  ternary double negacirculant $[28,14,9]$ codes  $\cN(r)$
that are inequivalent to the above unique ternary double circulant code, where
\allowdisplaybreaks
\begin{align*}
r=\,&
(1,2,0,2,1,1,2,0,2,1,0,0,0,0),
(1,2,2,2,0,2,0,2,2,1,0,0,0,0),\\&
(1,1,1,1,2,1,2,0,0,0,1,0,0,0),
(1,1,2,1,2,0,0,1,2,0,1,0,0,0),\\&
(1,2,2,2,2,0,1,1,0,0,2,0,0,0),
(1,1,2,1,2,1,1,1,1,1,2,0,0,0),\\&
(1,1,2,1,2,2,2,1,2,1,0,1,0,0),
(1,2,2,1,1,1,1,1,2,2,0,1,0,0) \text{ and}\\&
(1,2,2,2,2,2,0,1,1,2,1,1,0,0).
\end{align*}
On the other hand,
it is known that there is no ternary $[28,14,d]$ code for $d \ge 11$ (see~\cite{Grassl}).
Hence, we have that $d_{3,DT}(28) \in \{9,10\}$.
Moreover, our exhaustive computer search under 
the condition (C3) in Lemma~\ref{lem:reduction3-4} verified that there is
no ternary double Toeplitz $[28,14,10]$ code.
Therefore, we have the following:

\begin{prop}\label{prop:F3-28}
$d_{3,DT}(28)=9$.
\end{prop}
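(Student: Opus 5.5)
The plan is to bound $d_{3,DT}(28)$ from both sides, following the discussion immediately preceding the statement.

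For the lower bound $d_{3,DT}(28)\ge 9$, it suffices to exhibit one ternary double Toeplitz $[28,14]$ code with minimum weight $9$. Every double circulant code is double Toeplitz: $T(t,a,b)$ is circulant when $a_i=b_{n-i}$. So the ternary double circulant $[28,14,9]$ code $\cC(r)$ with
\[
r=(1, 1, 1, 1, 2, 1, 1, 1, 2, 0, 0, 1, 0, 0)
\]
(cf.~\cite{DGH}) is a double Toeplitz code with minimum weight $9$. Its minimum weight is confirmed by a direct computer calculation, and this gives $d_{3,DT}(28)\ge 9$.

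For the upper bound, we first invoke the table of~\cite{Grassl}: there is no ternary $[28,14,d]$ code with $d\ge 11$. Hence $d_{3,DT}(28)\le 10$, and it remains to rule out minimum weight exactly $10$ among double Toeplitz codes. For this we perform an exhaustive search over triples $(t,a,b)$ with $t\in\FF_3$ and $a,b\in\FF_3^{13}$. The search space has $3^{27}$ elements a priori. Lemma~\ref{lem:reduction3-4} cuts this by a factor of $2$: every double Toeplitz code is equivalent to some $\cT(t',a',b')$ satisfying (C3), and equivalence preserves minimum weight, so it suffices to check triples satisfying (C3). For each candidate we test whether the code contains a nonzero codeword of weight at most $9$.

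The main obstacle is computational: roughly $3^{27}/2\approx 3.8\times 10^{12}$ candidates, each needing a minimum-weight test. I would organize the search so that it is prunable. Any codeword $(x, xT)$ with $\wt(x)=1$ or $2$ must have weight at least $10$, and these conditions involve only a few entries of $T$. So I would fix $t$ and $a$, then enumerate $b$ incrementally, discarding partial assignments as soon as some low-weight information vector $x$ already forces a codeword of weight at most $9$. The survivors would then be passed to a full minimum-weight computation in \textsc{Magma}, stopping at the first codeword of weight at most $9$. If this search finds no code with minimum weight $10$, then $d_{3,DT}(28)\le 9$, and together with the lower bound we conclude $d_{3,DT}(28)=9$.
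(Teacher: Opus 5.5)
Your proposal is correct and follows the paper's own argument. The double circulant code $\cC(r)$ with $r=(1,1,1,1,2,1,1,1,2,0,0,1,0,0)$ is a double Toeplitz code and gives $d_{3,DT}(28)\ge 9$. The tables of~\cite{Grassl} give $d_{3,DT}(28)\le 10$, and an exhaustive search over triples satisfying (C3) of Lemma~\ref{lem:reduction3-4} rules out a ternary double Toeplitz $[28,14,10]$ code. Your incremental pruning is only an implementation detail of that search.
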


Now, by exhaustive computer search,
we found all distinct ternary DT-optimal double Toeplitz $[n,n/2,d_{3,DT}(n)]$
codes satisfying the condition (C3) in Lemma~\ref{lem:reduction3-4}, 
which must be checked further for equivalences for $n=4,6,\ldots,26$.
After equivalence testing, we completed the classification of
ternary DT-optimal double Toeplitz $[n,n/2,d_{3,DT}(n)]$ codes.
In Table~\ref{Tab:F3-classification}, we list the number $N_{3,DT}(n)$ 
of inequivalent ternary DT-optimal double Toeplitz $[n,n/2,d_{3,DT}(n)]$ codes that 
are inequivalent to any of ternary DT-optimal double circulant codes  and 
ternary DT-optimal double negacirculant codes.
In the table, we also list the number $N_{3,DC}(n)$ 
of inequivalent ternary DT-optimal double circulant  $[n,n/2,d_{3,DT}(n)]$ codes
and the number $N_{3,NC}(n)$
of inequivalent ternary DT-optimal double negacirculant  $[n,n/2,d_{3,DT}(n)]$ codes
that are inequivalent to any of ternary DT-optimal double circulant codes.
Of course, the number 
of inequivalent ternary DT-optimal double Toeplitz $[n,n/2,d_{3,DT}(n)]$ codes
is given by $N_{3,DT}(n)+N_{3,DC}(n)+N_{3,NC}(n)$.

\begin{table}[thbp]
\caption{Ternary DT-optimal double Toeplitz codes}
\label{Tab:F3-classification}
\centering
\medskip
{\small
\begin{tabular}{c|c|c|cl|c}
\noalign{\hrule height1pt}
$n$ & $d_{3,DT}(n)$ & $N_{3,DT}(n)$ & \multicolumn{2}{c|}{$N_{3,DC}(n)$}&
{$N_{3,DN}(n)$}   \\
\hline
  4&  3 & 0 &    0&\cite{DGH}&  1 \\
  6&  3 & 1 &    2&\cite{DGH}&  0 \\
  8&  4 & 0 &    3&\cite{DGH}&  0 \\
 10&  5 & 0 &   1&\cite{DGH}&  0 \\
 12&  6 & 0 &    0&\cite{DGH}&  1 \\
 14&  6 & 0 &    1&\cite{DGH}&  0 \\
 16&  6 & 104 &    7&          &  5 \\
 18&  6 &156189 &   57&          &  0 \\
 20&  7 & 27 &    5&          & 11 \\
 22&  8 & 1  &    2&          &  0 \\
 24&  9 & 0 &    0&          &  2 \\
 26&  8 & 3186&  376&          &  0 \\
\noalign{\hrule height1pt}
\end{tabular}
}
\end{table}

\begin{table}[thbp]
\caption{Ternary DT-optimal double circulant codes}
\label{Tab:F3-DCC}
\centering
\medskip
{\footnotesize
\begin{tabular}{c|l||c|l||c|l}
\noalign{\hrule height1pt}
$n$ & \multicolumn{1}{c||}{$r$} &
$n$ & \multicolumn{1}{c||}{$r$} & $n$ &  \multicolumn{1}{c}{$r$} \\
\hline
16 &$(1,2,2,1,1,0,0,0)$ &18 &$(1,1,2,1,2,2,0,0,0)$ &18 &$(1,2,2,2,2,2,1,0,0)$ \\
16 &$(1,1,1,1,0,1,0,0)$ &18 &$(1,1,1,1,0,0,1,0,0)$ &18 &$(1,2,1,2,1,0,2,0,0)$ \\
16 &$(1,1,2,2,0,1,0,0)$ &18 &$(1,2,1,1,0,0,1,0,0)$ &18 &$(1,2,2,2,1,0,2,0,0)$ \\
16 &$(1,2,1,1,1,1,0,0)$ &18 &$(1,2,2,1,0,0,1,0,0)$ &18 &$(1,1,1,2,2,0,2,0,0)$ \\
16 &$(1,2,2,2,2,1,0,0)$ &18 &$(1,1,1,2,0,0,1,0,0)$ &18 &$(1,2,2,2,2,0,2,0,0)$ \\
16 &$(1,1,2,0,1,0,1,0)$ &18 &$(1,2,1,2,0,0,1,0,0)$ &18 &$(1,2,2,1,1,1,2,0,0)$ \\
16 &$(1,1,2,1,1,0,1,0)$ &18 &$(1,1,2,2,0,0,1,0,0)$ &18 &$(1,1,1,2,1,1,2,0,0)$ \\
18 &$(1,2,1,1,1,0,0,0,0)$ &18 &$(1,2,2,2,0,0,1,0,0)$ &18 &$(1,1,1,1,2,1,2,0,0)$ \\
18 &$(1,1,2,1,1,0,0,0,0)$ &18 &$(1,1,1,1,1,0,1,0,0)$ &18 &$(1,1,2,1,1,2,2,0,0)$ \\
18 &$(1,2,2,1,1,0,0,0,0)$ &18 &$(1,2,2,1,1,0,1,0,0)$ &18 &$(1,2,1,2,1,0,1,1,0)$ \\
18 &$(1,2,2,2,1,0,0,0,0)$ &18 &$(1,1,0,2,1,0,1,0,0)$ &18 &$(1,1,1,2,2,0,1,1,0)$ \\
18 &$(1,1,1,1,0,1,0,0,0)$ &18 &$(1,0,1,2,1,0,1,0,0)$ &18 &$(1,2,1,2,2,0,1,1,0)$ \\
18 &$(1,2,1,1,0,1,0,0,0)$ &18 &$(1,1,2,2,1,0,1,0,0)$ &18 &$(1,2,1,2,2,1,1,1,0)$ \\
18 &$(1,1,2,1,0,1,0,0,0)$ &18 &$(1,2,2,2,1,0,1,0,0)$ &18 &$(1,2,2,2,1,2,1,1,0)$ \\
18 &$(1,2,2,1,0,1,0,0,0)$ &18 &$(1,2,0,2,2,0,1,0,0)$ &18 &$(1,2,2,1,2,2,1,1,0)$ \\
18 &$(1,2,1,2,0,1,0,0,0)$ &18 &$(1,2,1,1,1,1,1,0,0)$ &18 &$(1,2,1,2,2,2,1,1,0)$ \\
18 &$(1,2,2,2,0,1,0,0,0)$ &18 &$(1,1,2,1,1,1,1,0,0)$ &20 &$(1,2,1,1,0,1,1,0,0,0)$ \\
18 &$(1,2,2,0,1,1,0,0,0)$ &18 &$(1,2,1,2,1,1,1,0,0)$ &20 &$(1,1,1,0,1,1,2,0,0,0)$ \\
18 &$(1,2,0,1,1,1,0,0,0)$ &18 &$(1,1,2,2,1,1,1,0,0)$ &20 &$(1,1,1,0,2,1,2,0,0,0)$ \\
18 &$(1,2,1,1,1,1,0,0,0)$ &18 &$(1,2,1,1,2,1,1,0,0)$ &20 &$(1,0,2,1,2,1,0,1,0,0)$ \\
18 &$(1,1,2,1,1,1,0,0,0)$ &18 &$(1,2,2,2,2,1,1,0,0)$ &20 &$(1,2,2,1,2,0,1,0,1,0)$ \\
18 &$(1,1,2,2,1,1,0,0,0)$ &18 &$(1,2,1,2,0,2,1,0,0)$ &22 &$(1,1,1,2,1,2,0,0,1,0,0)$ \\
18 &$(1,2,1,0,2,1,0,0,0)$ &18 &$(1,2,2,2,1,2,1,0,0)$ &22 &$(1,2,1,1,1,2,2,2,1,2,0)$ \\
18 &$(1,2,2,1,1,2,0,0,0)$ &18 &$(1,2,2,1,2,2,1,0,0)$ &&\\
\noalign{\hrule height1pt}
\end{tabular}
}
\end{table}

\begin{table}[thbp]
\caption{Ternary DT-optimal double negacirculant codes}
\label{Tab:F3-DNC}
\centering
\medskip
{\footnotesize
\begin{tabular}{c|l||c|l}
\noalign{\hrule height1pt}
$n$ & \multicolumn{1}{c||}{$r$} & $n$ &  \multicolumn{1}{c}{$r$} \\
\hline
 4 & $(1,1)$ & 20 & $(1,2,2,2,0,1,1,0,0,0)$ \\
12 & $(1,2,1,1,1,0)$ &20 & $(1,2,2,2,0,1,2,0,0,0)$ \\
16 & $(1,1,2,1,1,0,0,0)$ &20 & $(1,2,1,0,1,1,2,0,0,0)$ \\
16 & $(1,2,2,1,1,0,0,0)$ &20 & $(1,1,1,2,1,0,0,1,0,0)$ \\
16 & $(1,2,1,2,0,1,0,0)$ &20 & $(1,0,1,2,2,1,0,1,0,0)$ \\
16 & $(1,1,1,0,1,1,0,0)$ &20 & $(1,2,2,2,2,1,0,1,0,0)$ \\
16 & $(1,2,2,2,1,1,0,0)$ &20 & $(1,0,2,2,2,2,0,1,0,0)$ \\
20 & $(1,1,2,1,1,0,1,0,0,0)$ &20 & $(1,1,2,2,0,1,1,1,0,0)$ \\
20 & $(1,2,1,1,0,1,1,0,0,0)$ &24 & $(1,1,1,1,2,2,0,1,0,1,0,0)$ \\
20 & $(1,1,1,2,0,1,1,0,0,0)$ &24 & $(1,1,1,1,2,2,1,1,2,1,2,0)$ \\
\noalign{\hrule height1pt}
\end{tabular}
}
\end{table}

\begin{table}[thbp]
\caption{Ternary DT-optimal double Toeplitz $[20,10,7]$ codes}
\label{Tab:F3-20}
\centering
\medskip
{\footnotesize
\begin{tabular}{c|c|c}
\noalign{\hrule height1pt}
$t$ & $a$ & $b$ \\
\hline
0 & $(0,0,1,1,2,1,1,0,1)$ & $(1,2,0,1,1,2,2,2,2)$ \\
0 & $(0,0,1,2,1,0,1,1,1)$ & $(1,2,2,0,2,1,2,0,0)$ \\
0 & $(0,1,2,1,1,1,1,0,0)$ & $(2,0,0,2,2,2,2,1,2)$ \\
0 & $(0,1,1,1,1,2,1,0,0)$ & $(1,0,0,2,1,2,2,2,2)$ \\
0 & $(0,1,2,1,0,1,1,1,0)$ & $(2,2,2,2,0,2,1,2,0)$ \\
0 & $(0,1,0,2,1,1,1,1,0)$ & $(2,0,1,1,1,1,2,0,1)$ \\
0 & $(0,1,0,2,1,2,1,1,0)$ & $(1,0,2,2,1,2,1,0,2)$ \\
0 & $(0,1,2,1,1,2,1,0,1)$ & $(1,0,2,0,2,1,2,2,1)$ \\
0 & $(0,1,1,1,1,2,2,0,1)$ & $(2,0,2,0,1,1,2,2,2)$ \\
0 & $(0,1,0,1,0,2,2,1,1)$ & $(2,2,2,2,1,1,0,2,0)$ \\
0 & $(0,1,0,2,0,2,2,2,1)$ & $(2,2,2,1,1,1,0,1,0)$ \\
0 & $(1,0,1,2,1,1,2,1,0)$ & $(2,0,0,2,1,2,2,1,2)$ \\
1 & $(0,0,2,2,2,1,1,0,0)$ & $(2,0,0,2,2,1,1,1,0)$ \\
1 & $(0,0,1,2,0,0,1,1,1)$ & $(1,2,2,2,0,0,1,2,0)$ \\
1 & $(0,0,1,2,0,0,2,2,1)$ & $(1,2,2,1,0,0,2,1,0)$ \\
1 & $(0,1,1,2,2,2,0,1,1)$ & $(1,0,1,1,2,2,2,1,1)$ \\
1 & $(0,1,2,1,1,2,1,1,1)$ & $(1,1,2,2,2,1,2,2,1)$ \\
1 & $(0,2,2,1,1,0,1,1,1)$ & $(1,1,0,1,0,1,0,2,2)$ \\
1 & $(1,0,1,2,1,1,2,1,1)$ & $(1,2,2,2,1,2,2,1,2)$ \\
1 & $(1,0,1,2,1,2,2,1,1)$ & $(1,1,2,2,1,1,2,1,2)$ \\
1 & $(1,0,2,0,2,0,0,2,2)$ & $(2,2,0,2,2,0,2,2,2)$ \\
1 & $(1,1,2,1,1,2,1,0,1)$ & $(2,2,2,0,2,1,2,2,1)$ \\
1 & $(1,1,0,1,2,1,1,2,1)$ & $(2,2,2,1,2,2,1,2,0)$ \\
1 & $(1,1,2,0,0,2,0,0,2)$ & $(2,2,0,0,2,2,0,1,0)$ \\
1 & $(1,1,2,2,2,0,2,1,2)$ & $(2,1,1,2,1,0,1,1,1)$ \\
1 & $(1,2,0,1,2,0,2,0,0)$ & $(2,2,0,1,1,0,1,0,0)$ \\
1 & $(1,2,1,1,2,1,0,1,0)$ & $(1,2,2,2,0,2,1,2,2)$ \\
\noalign{\hrule height1pt}
\end{tabular}
}
\end{table}

For the ternary DT-optimal double circulant codes $\cC(r)$ of lengths $n=16,18,20,22$, 
the rows $r$ are listed in Table~\ref{Tab:F3-DCC}.
For the ternary DT-optimal double negacirculant codes $\cN(r)$ of lengths $n=4,12,16,20,24$, 
the rows $r$ are listed in Table~\ref{Tab:F3-DNC}.
The unique ternary DT-optimal double Toeplitz $[6,3,3]$ code $\cT(t,a,b)$ is constructed as:
\begin{align*}
(t,a,b)=(1, (1, 0), (2, 1)).
\end{align*}
For the $27$ ternary DT-optimal double Toeplitz $[20,10,7]$ codes $\cT(t,a,b)$,
the triples $(t,a,b)$ are listed in Table~\ref{Tab:F3-20}.
The unique ternary DT-optimal double Toeplitz $[22,11,8]$ code $\cT(t,a,b)$ is constructed as:
\begin{align*}
(t,a,b)=(1,(0,1,2,1,1,2,1,1,1,2),(1,1,2,2,2,1,2,2,1,2)).
\end{align*}
The remaining ternary DT-optimal double Toeplitz codes,
double circulant codes and double negacirculant codes 
are available at
\url{https://www.math.is.tohoku.ac.jp/~mharada/DT}.

\subsection{Quaternary DT-optimal double Toeplitz codes}\label{Sec:quaternary}

The largest minimum weight $d_{4,DT}(n)$ among quaternary 
double Toeplitz $[n,n/2]$ codes was determined in~\cite[Table~5]{SXS}
for $n=4,6,\ldots,14$.
For $n=16,18,\ldots,24$, if there is a quaternary $[n,n/2,d]$ code,
then $d \le 7, d \le 8, d \le 8, d \le 9, d \le 9$, respectively (see~\cite{Grassl}).
For $(n,d)=(16,7)$, $(18,8)$, $(22,9)$,
our exhaustive computer search under 
the condition (C3) in Lemma~\ref{lem:reduction3-4} verified that there is
no quaternary double Toeplitz $[n,n/2,d]$ code.
The existence of a quaternary double circulant $[n,n/2,d]$ code is known 
for $(n,d)=(16,6)$, $(18,7)$, $(20,8)$, $(22,8)$, $(24,9)$  (see Section~\ref{sec:4-dmax}).
Therefore, we have the following:

\begin{prop}\label{prop:F4-16-18-20}
\[
d_{4,DT}(n)=
\begin{cases}
6 & \text{ if }n=16, \\
7 & \text{ if }n=18, \\
8 & \text{ if }n=20, \\
8 & \text{ if }n=22, \\
9 & \text{ if }n=24.
\end{cases}
\]
\end{prop}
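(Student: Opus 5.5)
The plan is to establish each of the five values by pairing an upper bound with a matching lower bound. The upper bounds come from two sources: general coding bounds, and an exhaustive search restricted to double Toeplitz codes. The lower bounds come from explicit double circulant codes, which are in particular double Toeplitz codes since a circulant matrix is Toeplitz with $a_i=b_{n-i}$.

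For the lower bounds I will invoke the explicit constructions in Section~\ref{sec:4-dmax}.
\begin{itemize}
\item For $n=16$ and $n=18,20$, the code $\cC((1,\ww,1,1,1,0,\ldots,0))$ has minimum weight $6$, and the code $\cC((1,\vv,1,\ww,1,0,1,0,\ldots,0))$ has minimum weight $7$ for $n=18$.
\item The codes listed there give minimum weight $8$ for $n=20,22$.
\item The codes listed there give minimum weight $9$ for $n=24$.
\end{itemize}
Hence $d_{4,DT}(16)\ge 6$, $d_{4,DT}(18)\ge 7$, $d_{4,DT}(20)\ge 8$, $d_{4,DT}(22)\ge 8$ and $d_{4,DT}(24)\ge 9$.

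For the upper bounds, first use the tables of bounds on linear codes~\cite{Grassl}. These show that a quaternary $[n,n/2,d]$ code requires $d\le 7,8,8,9,9$ for $n=16,18,20,22,24$, respectively. This already settles $n=20$ and $n=24$. For the remaining three lengths it suffices to exclude double Toeplitz codes with parameters $[16,8,7]$, $[18,9,8]$ and $[22,11,9]$; excluding minimum weight exactly $d$ suffices, since larger $d$ is already impossible. This exclusion is done by an exhaustive computer search over all triples $(t,a,b)$, whose number is $|\Omega_{4,n}|=4^{n-1}$ by Theorem~\ref{thm:SXS}. I will reduce the search space by a factor of $3$ using condition (C3) of Lemma~\ref{lem:reduction3-4}: by Lemma~\ref{lem:tba}~(i), every code is equivalent to one with the first nonzero entry of $(t,a)$ equal to $1$, and equivalence preserves minimum weight.

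The main obstacle is the computation for $n=22$, where about $4^{21}/3\approx 1.5\times 10^{12}$ candidates remain. To make it feasible I will first try early abortion. For each triple, enumerate the low-weight combinations of rows of $(I_{n/2}\ T(t,a,b))$, namely those using at most $d-2$ rows, and discard the triple as soon as a codeword of weight less than $d$ is found. Most candidates die immediately; for instance, a row of $T$ of weight at most $d-2$ already gives such a codeword. Only the rare survivors need a full minimum weight computation in \textsc{Magma}. If this is still too slow, a further twofold reduction is available from Lemma~\ref{lem:tba}~(ii) by imposing an ordering on $(a,b)$, as in (C2). Combining the three nonexistence results with the bounds and constructions above yields the stated values.
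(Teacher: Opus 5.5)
Your proposal is correct and matches the paper's proof: it takes the upper bounds $d\le 7,8,8,9,9$ from~\cite{Grassl}, rules out double Toeplitz codes with $(n,d)=(16,7),(18,8),(22,9)$ by exhaustive search under condition (C3), and gets the lower bounds from the explicit double circulant codes of Section~\ref{sec:4-dmax}. Your early-abortion filter and the optional extra reduction via Lemma~\ref{lem:tba}~(ii) are computational details the paper does not describe; both are sound, because a candidate is discarded only when a codeword of weight less than $d$ has actually been found, and minimum weight is preserved under equivalence.
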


For $n=4,6,\ldots,14,18,20$,
by exhaustive computer search, 
we found all distinct quaternary DT-optimal double Toeplitz $[n,n/2,d_{4,DT}(n)]$ 
codes satisfying the condition (C3) in Lemma~\ref{lem:reduction3-4}, 
which must be checked further for equivalences.
After equivalence testing, we completed the classification of
quaternary DT-optimal double Toeplitz $[n,n/2,d_{4,DT}(n)]$ codes.
In Table~\ref{Tab:F4-classification}, we list the number $N_{4,DT}(n)$ 
of  inequivalent quaternary  DT-optimal double Toeplitz 
$[n,n/2,d_{4,DT}(n)]$ codes that are inequivalent to any of
quaternary  DT-optimal double circulant codes.
In the table, we also list the number $N_{4,DC}(n)$
of  inequivalent quaternary  DT-optimal double circulant  $[n,n/2,d_{4,DT}(n)]$ codes.
Of course, the number of inequivalent quaternary  DT-optimal double Toeplitz 
$[n,n/2,d_{4,DT}(n)]$ codes is given by  $N_{4,DT}(n)+N_{4,DC}(n)$.
Due to limitations on computing time, we were unable to complete a classification 
of quaternary DT-optimal double Toeplitz $[16,8,6]$ codes.
The primary difficulty arises from the sheer number of such codes.

\begin{table}[thbp]
\caption{Quaternary DT-optimal double Toeplitz codes}
\label{Tab:F4-classification}
\centering
\medskip
{\small
\begin{tabular}{c|cl|c|c}
\noalign{\hrule height1pt}
$n$&\multicolumn{2}{c|}{$d_{4,DT}(n)$} & $N_{4,DT}(n)$ & $N_{4,DC}(n)$ \\
\hline
 4 & 3 &~\cite[Table~5]{SXS} & 0&  1 \\
 6 & 4 &~\cite[Table~5]{SXS} & 0&  1 \\
 8 & 4 &~\cite[Table~5]{SXS} & 7&  6 \\
10 & 5 &~\cite[Table~5]{SXS} & 2&  2 \\
12 & 5 &~\cite[Table~5]{SXS} & 6864&  13 \\
14 & 6 &~\cite[Table~5]{SXS} & 360&  19 \\
16 & 6 & Proposition~\ref{prop:F4-16-18-20} & ?  &  218 \\
18 & 7 & Proposition~\ref{prop:F4-16-18-20} & 2502 & 15 \\
20 & 8 & Proposition~\ref{prop:F4-16-18-20}  &0   & 4 \\
\noalign{\hrule height1pt}
\end{tabular}
}
\end{table}

\begin{table}[thbp]
\caption{Quaternary DT-optimal double circulant codes}
\label{Tab:F4-DCC}
\centering
\medskip
{\footnotesize
\begin{tabular}{c|l||c|l}
\noalign{\hrule height1pt}
$n$ & \multicolumn{1}{c||}{$r$} & $n$ &  \multicolumn{1}{c}{$r$} \\
\hline
4&$(1,\ww)$ &14&$(1,\ww,\ww^2,\ww,1,1,1)$ \\
6&$(1,\ww,1)$ &14&$(1,\ww^2,\ww,\ww^2,1,1,1)$ \\
8&$(1,1,1,0)$ &14&$(1,\ww,1,1,1,0,0)$ \\
8&$(1,\ww,1,0)$ &14&$(1,\ww^2,1,1,1,0,0)$ \\
8&$(1,1,\ww,0)$ &14&$(1,1,\ww,1,1,0,0)$ \\
8&$(1,\ww^2,\ww,0)$ &14&$(1,\ww,\ww,1,1,0,0)$ \\
8&$(1,\ww,1,1)$ &14&$(1,1,\ww^2,1,1,0,0)$ \\
8&$(1,\ww,\ww,1)$ &14&$(1,\ww^2,1,\ww,1,0,0)$ \\
10&$(1,\ww,1,\ww,0)$ &14&$(1,\ww,\ww^2,\ww,1,0,0)$ \\
10&$(1,\ww^2,1,\ww,0)$ &14&$(1,\ww^2,1,1,\ww,0,0)$ \\
12&$(1,\ww,1,1,0,0)$ &14&$(1,\ww^2,\ww^2,1,\ww,0,0)$ \\
12&$(1,\ww^2,1,1,0,0)$ &14&$(1,\ww^2,1,\ww,\ww,0,0)$ \\
12&$(1,1,1,\ww,0,0)$ &14&$(1,1,\ww,\ww^2,\ww,0,0)$ \\
12&$(1,1,\ww,0,1,0)$ &14&$(1,\ww,\ww,\ww^2,\ww,0,0)$ \\
12&$(1,\ww^2,\ww,0,1,0)$ &14&$(1,\ww^2,\ww^2,\ww^2,\ww,0,0)$ \\
12&$(1,1,\ww^2,0,1,0)$ &14&$(1,\ww^2,\ww,1,1,1,0)$ \\
12&$(1,\ww,\ww,1,1,0)$ &14&$(1,\ww^2,1,\ww,1,1,0)$ \\
12&$(1,\ww^2,\ww,1,1,0)$ &14&$(1,\ww,1,\ww^2,1,1,0)$ \\
12&$(1,\ww,\ww^2,1,1,0)$ &14&$(1,1,\ww,1,\ww,\ww,0)$ \\
12&$(1,\ww^2,\ww^2,1,1,0)$ &14&$(1,\ww,\ww^2,\ww,1,1,1)$ \\
12&$(1,\ww^2,\ww,\ww,1,0)$ &14&$(1,\ww^2,\ww,\ww^2,1,1,1)$ \\
12&$(1,\ww^2,\ww,1,1,1)$ &18&$(1,\ww^2,1,\ww,1,0,1,0,0)$ \\
12&$(1,\ww,\ww^2,\ww,1,1)$ &18&$(1,1,\ww^2,\ww,1,0,1,0,0)$ \\
14&$(1,\ww,1,1,1,0,0)$ &18&$(1,1,\ww,\ww^2,1,0,1,0,0)$ \\
14&$(1,\ww^2,1,1,1,0,0)$ &18&$(1,\ww^2,\ww,\ww,0,1,1,0,0)$ \\
14&$(1,1,\ww,1,1,0,0)$ &18&$(1,\ww^2,\ww^2,\ww,0,1,1,0,0)$ \\
14&$(1,\ww,\ww,1,1,0,0)$ &18&$(1,\ww^2,\ww,\ww,1,0,\ww,0,0)$ \\
14&$(1,1,\ww^2,1,1,0,0)$ &18&$(1,1,\ww^2,1,1,1,\ww,0,0)$ \\
14&$(1,\ww^2,1,\ww,1,0,0)$ &18&$(1,\ww,1,\ww,1,1,\ww,0,0)$ \\
14&$(1,\ww,\ww^2,\ww,1,0,0)$ &18&$(1,\ww^2,1,\ww,1,1,\ww,0,0)$ \\
14&$(1,\ww^2,1,1,\ww,0,0)$ &18&$(1,\ww^2,0,1,\ww,1,\ww,0,0)$ \\
14&$(1,\ww^2,\ww^2,1,\ww,0,0)$ &18&$(1,\ww,\ww,1,\ww,1,\ww,0,0)$ \\
14&$(1,\ww^2,1,\ww,\ww,0,0)$ &18&$(1,\ww^2,\ww^2,\ww,1,1,1,1,0)$ \\
14&$(1,1,\ww,\ww^2,\ww,0,0)$ &18&$(1,1,\ww^2,\ww^2,\ww,1,1,1,0)$ \\
14&$(1,\ww,\ww,\ww^2,\ww,0,0)$ &18&$(1,\ww,\ww,\ww,1,\ww,1,1,0)$ \\
14&$(1,\ww^2,\ww^2,\ww^2,\ww,0,0)$ &18&$(1,\ww,\ww,\ww^2,\ww,\ww,1,1,0)$ \\
14&$(1,\ww^2,\ww,1,1,1,0)$ &20&$(1,\ww^2,1,1,\ww,\ww,\ww^2,\ww,0,0)$ \\
14&$(1,\ww^2,1,\ww,1,1,0)$ &20&$(1,\ww,\ww^2,0,\ww^2,\ww,1,0,1,0)$ \\
14&$(1,\ww,1,\ww^2,1,1,0)$ &20&$(1,\ww,\ww^2,\ww,\ww,\ww,\ww^2,\ww,1,0)$ \\
14&$(1,1,\ww,1,\ww,\ww,0)$ &20&$(1,\ww^2,\ww,\ww^2,\ww,\ww^2,1,0,\ww,0)$ \\
\noalign{\hrule height1pt}
\end{tabular}
}
\end{table}

For the quaternary DT-optimal double circulant codes $\cC(r)$ 
of lengths $n \le 14$ and $n=18,20$, 
the rows $r$ are listed in Table~\ref{Tab:F4-DCC}.
The $7$ quaternary  DT-optimal double Toeplitz $[8,4,4]$ codes $\cT(t,a,b)$  are constructed as:
\begin{align*}
(t,a,b)=\,&
(0,(1,1,1), (1,\ww,1)),
\\&
(0,(1,1,1), (1,\ww^2,1)),
\\&
(0,(1,1,1), (1,1,\ww)),
\\&
(0,(1,1,1), (\ww^2,\ww^2,\ww)),
\\&
(0,(1,\ww,1), (\ww,1,\ww)),
\\&
(1,(1,1,0), (\ww^2,0,\ww)) \text{ and } 
\\&
(1,(\ww,1,0), (\ww^2,0,1)).
\end{align*}
The $2$ quaternary  DT-optimal double Toeplitz $[10,5,5]$ codes $\cT(t,a,b)$  are constructed as:
\begin{align*}
(t,a,b)=\,&
(0,(1,\ww,1,1), (1,\ww,\ww^2,\ww)) \text{ and } 
\\&
(1,(\ww,1,1,0), (\ww^2,1,0,\ww^2)).
\end{align*}
The remaining  quaternary  DT-optimal double Toeplitz codes 
and double circulant codes 
are available at
\url{https://www.math.is.tohoku.ac.jp/~mharada/DT}.

\subsection{Remark}
We end this paper with some remark on the classification.
Our classification reveals the existence of 
many DT-optimal double Toeplitz $[n,n/2]$ codes over $\FF_q$ that are inequivalent
to any of double circulant codes and double negacirculant codes.
There arises a natural question.
Let $d_{q,DC}(n)$ (resp.\ $d_{q,DN}(n)$)  denote the largest minimum weight among
all double circulant (resp.\ negacirculant) $[n,n/2]$ codes over $\FF_q$.
Then
it is worthwhile to determine whether there is a pair $(q,n)$ with 
\[
d_{q,DT}(n) > d_{q,DC}(n)
\]
or not for $q\in\{2,4\}$.
Also, it is worthwhile to determine whether there is an integer $n$ with 
\[
d_{3,DT}(n) > \max(d_{3,DC}(n),d_{3,DN}(n))
\]
or not.


\bigskip
\noindent
\textbf{Acknowledgments.}
This work was supported by JSPS KAKENHI Grant Number 23K25784.


\end{document}